\theoremstyle{plain}
\newtheorem{theorem}{Theorem}[section]
\newtheorem{lemma}[theorem]{Lemma}
\newtheorem{prop}[theorem]{Proposition}
\newtheorem{cor}[theorem]{Corollary}
\newtheorem{rem}[theorem]{Remark}
\renewcommand{\b}{\begin{equation}}
\newcommand{\e}{\end{equation}}
\newcommand\R{{\mathbb R}}
\title[A Calabi-type flow in Hermitian Geometry]{A scalar Calabi-type flow in Hermitian Geometry: Short-time existence and stability}
\subjclass{53C44, 53C55, 35K55, 53C10}
\thanks{This work was supported by the project FIRB ``Geometria differenziale e teoria geometrica delle funzioni'',
 the project PRIN
\lq\lq  Variet\`a reali e complesse: geometria, topologia e analisi armonica" and by GNSAGA of INdAM}
\address{Dipartimento di Ingegneria e Scienze dell'Informazione e Matematica \\ Universit\`a dell'Aquila\\
via Vetoio\\ 67100 L'Aquila\\ Italy}
\email{lucio.bedulli@univaq.it}
\address{Dipartimento di Matematica G. Peano \\ Universit\`a di Torino\\
Via Carlo Alberto 10\\
10123 Torino\\ Italy}
 \email{luigi.vezzoni@unito.it}
\author{Lucio Bedulli and Luigi Vezzoni}
\date{\today}
\begin{document}
\maketitle

\begin{abstract}
We introduce a new geometric flow of Hermitian metrics which evolves an initial metric along 
the second derivative of the Chern scalar curvature. The flow depends on the choice of a background  
metric, it always reduces to a scalar equation and preserves some special classes of Hermitian structures, such as balanced and Gauduchon metrics. 
We show that the flow has always a unique short-time solution and we provide a stability result 
 when the background metric is K\"ahler with constant scalar curvature (cscK). The main theorem is obtained by proving a general result about stability of parabolic flows on Riemannian manifolds which is interesting in its own right and in particular implies the stability of the classical Calabi flow near cscK metrics.
\end{abstract}

\section{introduction}
Giving an Hermitian metric on an $2n$-dimensional complex manifold $(M,J)$ is equivalent to assigning an $(n-1,n-1)$-form $\varphi$ which is {\em positive} in the sense that 
$$
\varphi(Z_1,\dots, Z_{n-1},\bar Z_1,\dots, \bar Z_{n-1})>0
$$
for every $\{Z_1,\dots,Z_{n-1}\}$ linearly independent vector fields of type $(1,0)$ on $(M,J)$.  Indeed if such a $\varphi$
is given, there exists a unique Hermitian metric $g$ whose fundamental form $\omega$ satisfies $*_{\omega}\omega=\varphi$, where $*_\omega$ is the induced \lq\lq star\rq\rq\, Hodge operator. This point of view suggests to consider special Hermitian metrics by imposing restrictions on the derivatives of $\omega^{n-1}$, instead of $\omega$. For instance, an Hermitian metric is called {\em balanced} \cite{M} if 
$$
d\omega^{n-1}=0\,,
$$ 
{\em Gauduchon} \cite{gaud} if  
$$
\partial \bar\partial \omega^{n-1}=0
$$
and {\em strongly Gauduchon} \cite{popovici} if 
$$
\partial \omega^{n-1} \mbox{ is $\bar\partial$-exact}\,. 
$$

Given  an Hermitian form $\omega$ on a complex manifold $M$, we consider the set of functions  
$$
C^{\infty}_{\omega}(M)=\left\{v\in C^{\infty}(M)\,\,:\,\,  \omega^{n-1}+\sqrt{-1}\partial \bar \partial(v \omega^{n-2})>0\right\}\,.
$$
Every $v\in C^{\infty}_{\omega}(M)$ induces the Hermitian form $\omega_v$ defined  by 
$\omega_v^{n-1}=\omega^{n-1}+\sqrt{-1}\partial \bar \partial(v \omega^{n-2})$; we denote by $\mathcal{C}_{ \omega}(M)$ the set 
$$
\mathcal{C}_{ \omega}(M)=\left\{\omega_v\,\,:\,\,  v \in C^{\infty}_{\omega}(M)  \right\}\,.
$$
Note that $\{\omega_v^{n-1}: v \in C^\infty_{ \omega}(M)\}$ is contained in
$$
\mathcal{K}_{\omega}=\{\omega^{n-1}+\sqrt{-1}\partial \bar\partial \sigma>0\}\,. 
$$
When $\omega$ is balanced, $\mathcal{K}_{\omega}$  is the set of closed positive $(n-1,n-1)$-forms on $M$ belonging to the Bott-Chern cohomology class of $\omega^{n-1}$. 

In this paper we are interested in solutions $\omega_t \in \mathcal{C}_{ \omega}(M)$ of the geometric flow 
\begin{equation}\label{flow}
\partial_t\omega_t^{n-1}=\sqrt{-1}\partial \bar \partial(s_{\omega_t}\omega^{n-2})\,,\quad 
 \omega_{|t=0}=\omega_0
\end{equation}
whose definition depends on the background Hermitian form $\omega$. By $s_{\omega_t}$ we denote the scalar curvature of the Chern connection induced by $\omega_t$. If the background metric is K\"ahler then Hermitian metrics with constant Chern scalar curvature  are stationary solutions to the flow. 
  
Equation \eqref{flow} preserves the balanced, the Gauduchon and the strongly Gauduchon condition and 
can be reduced to the scalar equation 
$$
\partial_tu_t=s_{u_t}\,\quad u_{|t=0}=u_0\,,
$$
where $u_0\in C^{\infty}_{\omega}(M)$ is such that $\omega_{u_0}=\omega_0$  and for $v\in C_{\omega}^{\infty}(M)$, $s_v$ is the Chern scalar curvature of $\omega_{v}$.

Our main result is the following  
\begin{theorem}[Short-time existence and stability of the flow] \label{main}
Flow \eqref{flow} has always a unique short-time solution $\{\omega_t\}_{t\in [0,T_{max})}$.
Assume further that the background metric $\omega$ is  K\"ahler with constant scalar curvature. Then if $\omega_0$ is close enough to $\omega$ in $C^{\infty}$-topology, the solution $\{\omega_t\}$ is defined for any positive $t$ and converges in $C^{\infty}$-topology to $\omega$. 
\end{theorem}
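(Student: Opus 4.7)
The plan is to treat the two assertions of Theorem \ref{main} in sequence, both on the basis of the scalar reduction $\partial_t u_t = s_{u_t}$ already exhibited in the text. The crucial preliminary is the structure of the linearization of the operator $F(u) := s_u$ on $C^\infty_\omega(M)$: I expect $F$ to be a fourth order quasilinear operator whose principal symbol at any $u$ is a positive scalar multiple of $|\xi|_{g_u}^4$. This follows by composing the linearization of $u\mapsto \omega_u$ (second order, with Laplacian-type principal symbol, as one sees by differentiating the defining identity $\omega_u^{n-1} = \omega^{n-1} + \sqrt{-1}\partial\bar\partial(u\,\omega^{n-2})$ and inverting via the algebraic isomorphism between $(n-1,n-1)$- and $(1,1)$-forms) with the linearization of the Chern scalar curvature in the metric (which contributes the Chern rough Laplacian acting on the trace of the metric variation).

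Once parabolicity of the scalar equation is in hand, short-time existence and uniqueness follow from standard theory for quasilinear fourth order parabolic equations on compact manifolds, for instance via the inverse function theorem in parabolic H\"older spaces (\`a la Lunardi) or by Hamilton's setup for strictly parabolic quasilinear systems.

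For the stability statement I would linearize at $u=0$, where the K\"ahler cscK background $\omega$ is (modulo a time-dependent additive constant that can be gauged away by an appropriate reparametrization of the scalar flow) a stationary solution. Since $\omega$ is K\"ahler the Chern scalar curvature agrees with the Riemannian one, and a K\"ahler-identities computation identifies $L := (dF)_0$ with a positive constant multiple of minus the classical Lichnerowicz operator that governs the Calabi flow. Hence $L$ is self-adjoint, non-positive, and its kernel consists of the potentials of real holomorphic vector fields. The desired stability then reduces to an instance of the general stability theorem for parabolic flows on Riemannian manifolds announced in the abstract: whenever the linearization at a stationary solution is a self-adjoint non-positive elliptic operator whose kernel is tangent to a submanifold of nearby stationary solutions, the flow is exponentially attracted to that submanifold; a standard parabolic bootstrap then upgrades the exponential decay in low regularity to convergence in the $C^\infty$-topology.

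The main obstacle will be the detailed treatment of $\ker L$. If $(M,\omega)$ admits non-trivial holomorphic vector fields, $\ker L$ is positive-dimensional and one must (i) identify the submanifold of nearby cscK metrics in $\mathcal{C}_\omega(M)$ whose tangent space at $\omega$ is $\ker L$, (ii) prove that $L$ has a spectral gap above its kernel, and (iii) argue that within $\mathcal{C}_\omega(M)$ the $C^\infty$-smallness assumption on $\omega_0$ forces the limit metric to be $\omega$ itself rather than some other nearby cscK metric. This last point is the subtle one and is where the particular structure of $\mathcal{C}_\omega(M)$ and of the Bott--Chern cohomology class of $\omega^{n-1}$ must be exploited.
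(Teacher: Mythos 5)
Your overall architecture coincides with the paper's: reduce to the scalar flow $\partial_t u_t=s_{u_t}$, prove that $v\mapsto s_v$ is a fourth-order quasilinear elliptic operator to get short-time existence, and for stability identify the linearization at the cscK background with $-\frac{1}{n-1}\mathcal{D}^*\mathcal{D}$ and invoke a general stability theorem for scalar parabolic flows. The differences lie in the stability endgame and in two technical points. First, the general theorem the paper actually proves (Theorem \ref{stability}) is \emph{not} the center-manifold statement you describe: its hypothesis (h2) requires $\ker L$ to consist of constants only, so that $Q^{-1}(0)$ near $0$ is a segment of constants and the limit is forced to be $\omega$ itself; under that hypothesis your points (i)--(iii) never arise. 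Your concern about holomorphic vector fields is legitimate --- the paper simply asserts that $-\frac{1}{n-1}\mathcal{D}^*\mathcal{D}$ satisfies (h2) and does not discuss nontrivial holomorphy potentials --- but your proposal, which defers (i)--(iii) as ``the main obstacle'', does not resolve the issue either; the paper's route sidesteps it by hypothesis rather than by the harder argument you sketch. Second, the principal symbol of $v\mapsto s_v$ is not a multiple of $|\xi|^4_{g_u}$: the computation in adapted coordinates yields minus a product of two \emph{different} positive-definite quadratic forms, $-\frac{1}{n-1}\sum_{r,k}\bigl(\sum_{i\neq k}\lambda_i\bigr)\gamma_r^{-1}|\xi_r|^2|\xi_k|^2$, and it is exactly this product structure $A=(-1)^{r-1}E_1E_2$ that the Huisken--Polden/Mantegazza--Martinazzi short-time existence theorem (the one the paper applies, rather than a H\"older-space inverse function theorem) demands. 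Third, uniqueness of the \emph{geometric} flow does not follow verbatim from uniqueness of the scalar flow: a second solution $\omega_{\tilde u}$ yields a potential solving $\partial_t\tilde u_t=s_{\tilde u_t}+f_t$ with $\partial\bar\partial(f_t\,\omega^{n-2})=0$, and one must renormalize by $\int_0^t f_s\,ds$ to conclude; your remark about gauging away an additive constant gestures at this but is made only in the stability context, and for a general (non-K\"ahler) background the gauge freedom is a priori larger than the constants.
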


The short-time existence of a solution to \eqref{flow} is obtained by proving that the operator $v\mapsto s_v$ is elliptic in a very strong sense and then applying a general result in \cite{HP,MM} (see sections \ref{section2}, \ref{section3}).

About the stability of \eqref{flow} near K\"ahler metrics with constant scalar curvature, 
we prove a general theorem about the stability of scalar flows and then we show that our flow satisfies all the assumptions of the theorem. Namely we consider the following set-up:

Let $(M, g)$ be an oriented compact Riemannian manifold with volume form $dV_g$ and let  $W^{2r,2}_+(M)$ be an open neighbourhood of $0$ in $W^{2r,2}(M)$ which is invariant by additive constants.  For $k>2r$ we denote $W^{2r,2}_+(M)\cap 
W^{k,2}(M)$ by $W^{k,2}_+(M)$ and let $C^{\infty}_+(M)=C^{\infty}(M)\cap W^{2r,2}_+(M)$.  
Let $Q\colon W^{2r,2}_+(M)\to L^2(M)$ be a smooth {\em elliptic} operator of order $2r$ (elliptic in a strong sense explained in section \ref{section2}) and denote  by $L$ the differential of $Q$ at $0$.
Assume further that $Q$ satisfies the following conditions
\begin{enumerate}
\item[(h1)] $Q(0)=0$ and $Q(v)=Q(v+a)$ for every $v\in W_+^{2r,2}(M)$, $a\in \R$ (here the set of constant functions is identified with $\R$);

\vspace{3pt}
\item[(h2)] The kernel of $L$ is made only by constant functions and $L(W^{2r,2}_0(M)) \subseteq L^2_0(M)$; where the subscript $0$ means that the elements have average 0 with respect to $g$;

\vspace{3pt}
\item[(h3)] $L$ is  {\em symmetric} and {semi-negative definite } with respect to the $L^2$-scalar product induced by the fixed metric $g$ on $M$, i.e. 
$$
\int_M L(v_1)v_2\,dV_g=\int_M L(v_2)v_1dV_g\,\,\,\, \mbox{and } \int_M L(v_1)v_1\,dV_g\leq 0
$$
for every $v_1,v_2\in W^{2r,2}(M)$.
\end{enumerate}

Under these hypotheses we have  

\begin{theorem}[Stability]\label{stability} 
For every $\epsilon>0$ there exists $\delta>0$ such that if $u_0\in C^{\infty}_+(M)$ satisfies $\|u_0\|_{C^\infty}< \delta $, then the parabolic problem
\begin{equation}\label{parabolic}
\partial_tu_t=Q(u_t)\,,\quad 
u_{|t=0}=u_0
\end{equation}
has a unique solution $u\in C^{\infty}(M\times[0,\infty))$ such that $u_t\in C^{\infty}_+(M)$ for every $t$ and satisfies 
\begin{enumerate}
\item[1.] $\|u_t\|_{C^\infty}<\epsilon $ for every $t\in [0,\infty)$;
\vspace{3pt}
\item[2.] $u_t$ converges in $C^\infty$--topology to a smooth function $u_\infty$ such that $Q(u_\infty)=0$. 
\end{enumerate}
\end{theorem}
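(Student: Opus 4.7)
The strong ellipticity of $Q$ combined with the short-time existence result of \cite{HP,MM} yields a unique smooth solution $u_t$ on a maximal interval $[0,T_{\max})$. Write $m\colon L^2(M)\to\R$ for the mean with respect to $dV_g$. By (h1), $Q$ descends to the quotient $W^{2r,2}_+(M)/\R$; decomposing $u_t=m(u_t)+\tilde u_t$ with $\tilde u_t$ mean-zero, the flow splits as
$$
\partial_t\tilde u_t=Q(\tilde u_t)-m(Q(\tilde u_t)),\qquad\tfrac{d}{dt}m(u_t)=m(Q(\tilde u_t)),
$$
so the essential dynamics is that of $\tilde u_t\in W^{2r,2}_0(M)$. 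Combining (h2) and (h3) with standard elliptic spectral theory, $-L$ restricted to mean-zero functions is positive self-adjoint with discrete spectrum $0<\lambda_1\leq\lambda_2\leq\cdots\to\infty$; in particular it is coercive, and G\aa rding-type estimates promote this to coercivity in every Sobolev space.

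Setting $N(v):=Q(v)-Lv$, smoothness of $Q$ and $Q(0)=0$ force $N$ to vanish to second order at $0$, giving Moser-type bounds
$$
\|N(v)\|_{W^{k,2}}\leq C\,\|v\|_{C^{2r}}\|v\|_{W^{k+2r,2}}
$$
for $v$ small in $C^{2r}$. Fix $k$ large enough that $W^{k,2}(M)\hookrightarrow C^{2r}(M)$, and for $j=0,\dots,K$ form the higher-order energies
$$
E_j(t):=\bigl\langle\tilde u_t,\,(-L)^j\tilde u_t\bigr\rangle_{L^2},
$$
with $K$ chosen so that $E_K$ dominates the $W^{k,2}_0$-norm. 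Direct differentiation yields
$$
\tfrac{d}{dt}E_j(t)=-2E_{j+1}(t)+2\bigl\langle(-L)^j\tilde u_t,\,N(\tilde u_t)\bigr\rangle_{L^2}.
$$
The strategy is to absorb the nonlinear contribution into the linear dissipation $-2E_{j+1}$ by interpolation, producing a differential inequality of the form $\tfrac{d}{dt}\mathcal E\leq -c\,\mathcal E$ for a suitable combined energy $\mathcal E\sim\|\tilde u_t\|^2_{W^{k,2}}$, valid as long as $\|\tilde u_t\|_{W^{k,2}}$ remains below a universal threshold. This is the heart of the argument and its main technical obstacle: the smallness required to absorb $N$ is itself controlled—via Sobolev embedding—by the very norm one is trying to estimate, so the inequality must be closed by a careful continuous-induction (bootstrap) argument.

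A standard continuation argument based on the resulting a priori bound, together with the parabolic regularity implied by the strong ellipticity, extends the solution to $[0,+\infty)$ and upgrades the $W^{k,2}$-decay to every Sobolev norm, hence to $C^\infty$, with exponential rate. Since $L$ annihilates constants, one has $m(Q(\tilde u_t))=m(N(\tilde u_t))=O(\|\tilde u_t\|^2_{L^2})$, which is exponentially integrable in $t$, so $m(u_t)\to c_\infty\in\R$. Consequently $u_t$ converges in $C^\infty$ to the constant function $u_\infty:=c_\infty$, and by (h1) we have $Q(u_\infty)=Q(0)=0$. Choosing the initial smallness $\delta>0$ appropriately in terms of $\epsilon$ and of the universal threshold above yields $\|u_t\|_{C^\infty}<\epsilon$ for every $t\geq 0$, completing the proof.
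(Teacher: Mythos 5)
Your plan is viable and reaches the right conclusion, but it follows a genuinely different route from the paper. You run a direct energy method on the solution itself: split off the mean, set $N=Q-L$, and try to propagate decay of the higher-order energies $E_j=\langle\tilde u_t,(-L)^j\tilde u_t\rangle_{L^2}$ by absorbing the nonlinear term into the dissipation $-2E_{j+1}$, which requires tame (Moser-type) estimates on $N$ and a continuous-induction argument to keep $\|\tilde u_t\|_{C^{2r}}$ below the absorption threshold. The paper instead takes $\|Q(u_t)\|_{L^2}^2$ as the Lyapunov functional: since $v=Q(u_t)$ solves the linearized equation $\partial_t v=L_{u_t}v$ \emph{exactly}, no nonlinear remainder ever appears; the only perturbation to control is the linear operator $L_{u_t}-L$, handled by the relatively bounded perturbation theorem (Lemma \ref{4.4}, via Theorem \ref{Tbound}), giving $L^2$-exponential decay of $Q(u_t)$ (Lemma \ref{L2expdecay}). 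Higher-order decay is then obtained not from higher energies but from parabolic smoothing (the interior estimates of Lemmas \ref{indipHam} and \ref{intest} together with the trace estimate of Corollary \ref{tracetheorem}), and long-time existence from writing $u_t=u_\epsilon+\int_\epsilon^t Q(u_\tau)\,d\tau$ and iterating over overlapping time intervals. Your approach buys a more self-contained, purely elliptic/spectral argument that avoids the parabolic Sobolev space machinery and gives convergence to an explicit constant $c_\infty$; the paper's approach buys freedom from any tame estimate on the nonlinearity, at the cost of the $P^k$ framework.

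Two points need attention before your sketch becomes a proof. First, the absorption step is asserted, not carried out, and it is delicate: pairing $(-L)^j\tilde u_t$ against $N(\tilde u_t)$ and splitting the derivatives evenly does close (each factor receives $r(j+1)$ derivatives, matching $E_{j+1}$, with a prefactor $\|\tilde u_t\|_{C^{2r}}$), but this requires integrating by parts through half-powers of $-L$ (or choosing the parity of $j$ appropriately), the norm equivalence $E_{j+1}\simeq\|\tilde u_t\|^2_{W^{r(j+1),2}}$ on mean-zero functions, and the Moser estimate at every Sobolev level; note also that at the level $j=0$ the absorption into $E_1$ fails, so one must work with a single high energy $E_K$ rather than the sum. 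Second, the claim $m(N(\tilde u_t))=O(\|\tilde u_t\|^2_{L^2})$ is not justified; what the quadratic vanishing gives is control by a product of high-order norms, which still decays exponentially and suffices for the convergence of the mean, but the estimate should be stated in the correct norms.
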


From theorem \ref{stability} it easily follows the stability of flow \eqref{flow} and the stability of the classical Calabi flow near cscK metrics which was already proved in \cite{Chen-He}. 

\bigskip
\noindent {\em{Acknowledgements}}. The authors would like to thank  Ernesto Buzano,  Jason Lotay, Weiyong He, Carlo Mantegazza, Valentino Tosatti  and Frederik Witt for useful conversations and remarks.

\section{Preliminaries on parabolic flows on Riemannian manifolds}\label{section2}
In this section we recall some results proved in \cite{HP,MM} about the short-time existence of parabolic flows on compact Riemannian manifolds. 

\medskip 
Let $(M,g)$ be a compact $m$-dimensional Riemannian manifold and let $Q\colon C^{\infty}(M)\to C^{\infty}(M)$ be a quasi-linear partial differential operator of order $2r$.  
Therefore $Q(v)$ locally writes as 
$$
Q(v)(x)=A^{i_1\dots i_{2r}}(x,v,\nabla v,\dots, \nabla^{2r-1}v)\nabla^{2r}_{i_1\dots i_{2r}}v(x)+b(x,v,\nabla v,\dots, \nabla^{2r-1}v)
$$
where $\nabla$ is the Levi-Civita connection of $g$ and the functions $A^{i_1\dots i_{2r}}$ and $b$ are smooth in their entries. We further assume that $Q$ is {\em elliptic} in a very strong sense  by requiring 
$$
A^{i_1j_1\dots i_rj_r}=(-1)^{r-1}E_1^{i_1j_1}\dots E^{i_rj_r}_r
$$
where each $E_k$ is a tensor of type $(2,0)$ for which there exists a positive $\lambda\in \R$ such that  
\begin{equation}\label{elliptic}
E_{k}^{ij}(x,\psi_1,\dots,\psi_{2r-1})\xi_i\,\xi_j \geq  \lambda |\xi|_{g}^2\,\,\mbox{ for every }\xi \in T^*_xM
\end{equation}
when  $x\in M$ and  $\psi_k \in \otimes_k T_x^*M$. Given such a $Q$ and an initial datum $u_0\in C^{\infty}(M)$, we consider the parabolic problem 
\begin{equation}\label{pproblem}
\partial_tu_t=Q(u_t)\,,\quad u_{|t=0}=u_0\,.
\end{equation}
We recall the following theorem whose proof can be found in \cite{HP,MM}
\begin{theorem}\label{short time existence}
Equation \eqref{pproblem} has always a maximal solution  
 $u\in C^{\infty}(M\times [0,T_{\max}))$, for some $T_{\max}>0$.  Moreover the solution $u$ depends continuously on the initial datum $u_0$. 
\end{theorem}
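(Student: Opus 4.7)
My plan is to reduce \eqref{pproblem} to a fixed-point problem in parabolic H\"older spaces by using the strong ellipticity assumption \eqref{elliptic} to set up the linear theory, and then to bootstrap the resulting solution to smoothness.

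First I would freeze coefficients at the initial datum $u_0$ and linearize $Q$: the differential $L_{u_0}$ is a linear operator of order $2r$ whose principal symbol, extracted from the factorization $A^{i_1j_1\dots i_rj_r}=(-1)^{r-1}E_1^{i_1j_1}\cdots E_r^{i_rj_r}$ together with \eqref{elliptic}, satisfies $-\sigma(L_{u_0})(x,\xi)\geq \lambda^r |\xi|_g^{2r}$. This is exactly the Petrovskii parabolicity condition for the linear Cauchy problem $\partial_t v-L_{u_0}v=f$, $v|_{t=0}=v_0$. On the compact manifold $M$ the classical theory of Eidelman and Solonnikov then produces a bounded solution operator from $C^{\alpha,\alpha/(2r)}(M\times[0,T])\oplus C^{2r+\alpha}(M)$ to the parabolic H\"older space $C^{2r+\alpha,1+\alpha/(2r)}(M\times[0,T])$, together with the standard Schauder-type estimate.

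Next I would rewrite \eqref{pproblem} as $\partial_t v - L_{u_0}v = Q(v)-L_{u_0}v$ and look for a fixed point of the map $\Phi$ that sends $v$ to the solution of this inhomogeneous linear problem with initial datum $u_0$. Since $Q-L_{u_0}$ is the Taylor remainder of $Q$ around $u_0$ and the coefficients $A^{i_1\dots i_{2r}}$, $b$ depend smoothly on their arguments, the map $v\mapsto Q(v)-L_{u_0}v$ is locally Lipschitz in $C^{2r+\alpha,1+\alpha/(2r)}$, with Lipschitz constant tending to $0$ as $T\to 0$ and as we restrict to small neighbourhoods of the stationary path $v\equiv u_0$. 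A standard contraction argument in a small closed ball around $u_0$ in $C^{2r+\alpha,1+\alpha/(2r)}(M\times[0,T])$, for $T$ sufficiently small, yields existence and uniqueness of a short-time H\"older solution as well as continuous dependence of the fixed point on $u_0$ in $C^{2r+\alpha}$.

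The main obstacle is upgrading this to a smooth solution on a maximal interval, with $C^\infty$-continuous dependence on $u_0$. Differentiating the equation in space and time shows that each derivative of $u$ solves a linear parabolic equation with H\"older coefficients (depending on lower-order derivatives of $u$, already controlled) and H\"older forcing, so iterating Schauder estimates gives $C^{k+\alpha,(k+\alpha)/(2r)}$ regularity for every $k$. Restarting the short-time argument at each time $t<T_{\max}$ produces the maximal interval $[0,T_{\max})$, and combining the continuous dependence in each $C^{k+\alpha}$ with interpolation upgrades the continuous dependence on $u_0$ to the $C^\infty$-topology.
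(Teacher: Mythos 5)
Your argument is essentially correct, but it follows a genuinely different route from the one the paper relies on. The paper does not prove Theorem \ref{short time existence} itself: it recalls it from \cite{HP,MM}, where the proof is run in the Hilbertian parabolic Sobolev spaces $P^k$ and rests on the inverse function theorem applied to the map $\mathcal F(u)=\left(u_{|t=0},\,\partial_t u-Q(u)\right)$, whose differential is shown to be an isomorphism (this is exactly the content of Theorem \ref{IFT}); existence, uniqueness and continuous dependence on $u_0$ then all come at once from the invertibility of $d\mathcal F$, and the same quantitative continuity of $\mathcal F^{-1}$ is reused later in Lemma \ref{corIFT}. You instead freeze coefficients at $u_0$, invoke Eidelman--Solonnikov Schauder theory for the linear Petrovskii-parabolic problem, and run a contraction in parabolic H\"older spaces; your identification of the symbol condition $-\sigma(L_{u_0})(x,\xi)\geq \lambda^r|\xi|_g^{2r}$ from the factorization of $A^{i_1j_1\dots i_rj_r}$ and \eqref{elliptic} is the right way to feed the hypothesis into that theory. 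Your route is more classical and self-contained, and gives pointwise H\"older control directly; the Sobolev/IFT route integrates better with the rest of the paper, since Section \ref{section4} works entirely in the $P^k$ norms. Two points you should make explicit if you write this up: (i) membership of the solution in $C^{2r+\alpha,1+\alpha/(2r)}$ up to $t=0$, and a fortiori smoothness at $t=0$ after bootstrapping, requires the compatibility conditions there, which hold because $u_0$ is smooth and the time derivatives $\partial_t^k u_{|t=0}$ are determined recursively by the equation; (ii) the smallness of the Lipschitz constant of $v\mapsto Q(v)-L_{u_0}v$ at top order comes from the smallness of $\|A(\cdot,v,\dots)-A(\cdot,u_0,\dots)\|_{C^{\alpha}}$, which you get by shrinking both $T$ and the radius of the ball around the constant-in-time extension of $u_0$, not from $T$ alone.
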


Let $k\in \mathbb{N}$ and $[t_1,t_2] \subset \R_{\geq 0}$. Then the {\em parabolic Sobolev spaces} $P^{k}([t_1,t_2])$ are defined as the completion of $C^{\infty}(M\times [t_1,t_2])$ with respect to the norm    
$$
\|f\|_{P^k([t_1,t_2])}^2=\sum_{l,s\in \mathbb N\,,\,\,2lr+s\leq 2rk}\int_{t_1}^{t_2}\int_M |\partial_t^l\nabla^sf|^2\,dV_g\, dt\,. 
$$ 

We recall the following theorem (see \cite[theorem 7.14]{HP} and \cite[proposition 2.3 and lemma 2.5]{MM}).

\begin{theorem}\label{IFT}
Assume 
\begin{equation}\label{condizionek}
k>\frac{m + 6r - 2}{4r}\,,
\end{equation}
and let $u\in P^{k}([0,T])$.  
Then $\left (u_{|t=0},\partial_t u-Q(u)\right)$ belongs to $W^{r(2k-1),2}(M)\times P^{k-1}([0,T])$ and the map 
$$
\mathcal{F}\colon P^k([0,T])\to W^{r(2k-1),2}(M)\times P^{k-1}([0,T])
$$
defined as  
$$
\mathcal F(u)=\left (u_{|t=0},\partial_t u-Q(u)\right).
$$
is $C^1$ and its differential $d\mathcal F_{|u}$ is an isomorphism for every $u\in P^k([0,T])$. 
\end{theorem}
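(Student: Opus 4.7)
The statement decomposes into three pieces: well-definedness of $\mathcal F$ into the stated target, its $C^1$ regularity, and invertibility of the differential at every point. In the parabolic scaling a $\partial_t$-derivative weighs like $2r$ spatial derivatives, so $P^k$ has total order $2rk$. A standard anisotropic trace theorem then identifies the image of $u\mapsto u|_{t=0}$ as $W^{r(2k-1),2}(M)$, which is exactly the expected half-derivative loss in the time direction. Moreover, condition \eqref{condizionek} is precisely the threshold for the parabolic Sobolev embedding $P^k([0,T])\hookrightarrow C^{2r-1}(M\times[0,T])$, so that $u,\nabla u,\dots,\nabla^{2r-1}u$ are all continuous; since $A^{i_1\dots i_{2r}}$ and $b$ are smooth in their arguments, $Q(u)$ reduces to a product of a continuous coefficient with $\nabla^{2r}u\in P^{k-1}$, which pointwise multiplication estimates place in $P^{k-1}$. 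Composition-in-Sobolev-spaces arguments applied to the smooth $A,b$ upgrade this to $C^1$ regularity of $\mathcal F$, with
\begin{equation*}
d\mathcal F|_u(v)=\bigl(v|_{t=0},\,\partial_t v-L_u(v)\bigr),
\end{equation*}
where $L_u$ is a linear operator of order $2r$ whose top-order symbol equals that of $Q$ evaluated at $(x,u,\dots,\nabla^{2r-1}u)$; in particular $L_u$ inherits \eqref{elliptic} uniformly on bounded subsets of $P^k$.

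The heart of the proof is the invertibility of $d\mathcal F|_u$. Given $(v_0,f)$ in the target space, this amounts to solving uniquely the linear Cauchy problem
\begin{equation*}
\partial_t v-L_u(v)=f,\qquad v|_{t=0}=v_0,
\end{equation*}
in $P^k([0,T])$. The strong ellipticity assumption yields $(-1)^{r-1}\sigma(L_u)(\xi)\geq \lambda^r|\xi|^{2r}$, making this a non-degenerate linear parabolic Cauchy problem of order $2r$. I would construct the solution by Galerkin approximation against the eigenbasis of a fixed self-adjoint elliptic comparison operator, say $(-\Delta_g)^r$, establish energy estimates for the approximations using \eqref{elliptic} and Gr\"onwall, pass to the limit, and then bootstrap parabolic regularity to land in $P^k$ via the a priori bound
\begin{equation*}
\|v\|_{P^k([0,T])}\leq C\bigl(\|v_0\|_{W^{r(2k-1),2}(M)}+\|f\|_{P^{k-1}([0,T])}\bigr).
\end{equation*}
Uniqueness comes from testing the equation against $v$ and invoking the ellipticity plus Gr\"onwall once more.

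The main obstacle lies precisely in proving the above a priori estimate, because the coefficients of $L_u$ live only in $P^{k-1}$ and not in a smooth class, so classical smooth-coefficient parabolic theory does not apply off the shelf. I would either approximate $u$ by smooth functions $u_\varepsilon$, prove the estimate with constants depending only on $\|u\|_{P^k}$, and pass to the limit; or, more directly, use commutator estimates of Kato--Ponce type to control $\|\partial_t^l\nabla^s v\|_{L^2}$ by $\|u\|_{P^k}$ together with lower-order parabolic norms of $v$, closing the induction on the parabolic order $k$ starting from the base energy estimate. Once this estimate is in place, boundedness of $d\mathcal F|_u^{-1}$ follows at once, so $d\mathcal F|_u$ is the desired isomorphism.
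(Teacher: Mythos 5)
The paper does not prove this theorem: it is recalled verbatim from \cite[Theorem 7.14]{HP} and \cite[Proposition 2.3 and Lemma 2.5]{MM}, so there is no in-paper argument to compare against. Your sketch is a faithful reconstruction of the strategy in those references --- parabolic trace plus the parabolic Sobolev embedding for well-definedness (and you correctly identify \eqref{condizionek} as exactly the threshold for $P^k([0,T])\hookrightarrow C^{2r-1}(M\times[0,T])$), linearization for the $C^1$ structure, and energy estimates with a Galerkin scheme for solvability of the linearized Cauchy problem. The one point worth making explicit is that the coercivity needed for your energy estimate is not a consequence of ellipticity of the full symbol alone: it is the special product structure $A^{i_1j_1\dots i_rj_r}=(-1)^{r-1}E_1^{i_1j_1}\cdots E_r^{i_rj_r}$ together with \eqref{elliptic} that lets one integrate by parts $r$ times and obtain a G\aa rding-type inequality $\int_M(-1)^{r-1}A\,\nabla^{2r}v\cdot v\,dV_g\leq -c\|\nabla^r v\|_{L^2}^2+C\|v\|_{W^{r-1,2}}^2$ elementarily; invoking ``\eqref{elliptic} and Gr\"onwall'' alone glosses over this, since for a general $2r$-order elliptic operator the corresponding estimate is genuinely harder.
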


\begin{rem}
{\em In the next part of the paper we need to apply theorem \ref{IFT} to operators defined on open subsets of $W^{k,2}(M)$ instead of  on the whole $W^{k,2}(M)$. The results described in this section can be easily adapted to this slightly more general setting.}
\end{rem}

In the sequel we will need the following corollary of theorem \ref{IFT}.
\begin{cor}\label{tracetheorem}
Let $k$ be an odd multiple of $r$ and $T\in \mathbb R^+$.  For every $\epsilon \in (0,T)$ there exists $C=C(k,\epsilon)$ such that 
if 
$u \in P^{\frac{k+r}{2r}}([0,T])$, then
$$
\|u_t\|_{_{W^{k,2}}}\leq C \|u\|_{P^{\frac{k+r}{2r}}([t,T])}\,,
$$
for every $t\in [0,T-\epsilon]$.
\end{cor}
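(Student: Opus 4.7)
The plan is to reduce the statement to a direct application of Theorem \ref{IFT}, combined with a translation argument, on subintervals of fixed length $\epsilon$.

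First, I would set $k' := \frac{k+r}{2r}$. Since $k$ is an odd multiple of $r$, writing $k=(2j+1)r$ we get $k'=j+1\in\mathbb N$, so the parabolic Sobolev space $P^{k'}$ is defined. A direct computation gives $r(2k'-1)=k$, so that Theorem \ref{IFT} (applied with its index replaced by $k'$, provided $k$ is large enough to satisfy $k'>\frac{m+6r-2}{4r}$, which is the case in all forthcoming applications) asserts that the map $\mathcal F\colon P^{k'}([0,L])\to W^{k,2}(M)\times P^{k'-1}([0,L])$ is $C^1$ for every $L>0$. In particular its first component, the trace-at-zero linear map $v\mapsto v_{|s=0}$, is continuous, hence bounded: there exists a constant $C_L$ such that
$$
\|v_{|s=0}\|_{W^{k,2}(M)}\le C_L\,\|v\|_{P^{k'}([0,L])}\qquad \forall\,v\in P^{k'}([0,L]).
$$

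Next I would exploit the obvious translation invariance of the $P^{k'}$-norm to move the point $t$ to the origin. Given $u\in P^{k'}([0,T])$ and $t\in[0,T-\epsilon]$, define $v(x,s):=u(x,s+t)$ for $s\in[0,\epsilon]$. Then $v\in P^{k'}([0,\epsilon])$ with
$$
\|v\|_{P^{k'}([0,\epsilon])}=\|u\|_{P^{k'}([t,t+\epsilon])}\le \|u\|_{P^{k'}([t,T])},
$$
the last inequality holding because $[t,t+\epsilon]\subseteq[t,T]$ and the $P^{k'}$-norm is monotone in the time interval. Applying the bound above with $L=\epsilon$ to $v$ gives
$$
\|u_t\|_{W^{k,2}(M)}=\|v_{|s=0}\|_{W^{k,2}(M)}\le C_\epsilon\,\|v\|_{P^{k'}([0,\epsilon])}\le C_\epsilon\,\|u\|_{P^{k'}([t,T])},
$$
which is the desired estimate, with $C:=C_\epsilon$ depending only on $k$ and $\epsilon$.

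The only mildly subtle point, and the main thing to check, is that the constant $C_L$ coming from Theorem \ref{IFT} genuinely depends only on the length $L$ of the interval rather than on its position on the time axis; this follows at once from the fact that both the differential operator $Q$ and the parabolic Sobolev norm are translation-invariant in time, so the statement of Theorem \ref{IFT} on any interval $[a,a+L]$ is obtained from that on $[0,L]$ by a simple time shift. Once this is noted, no further analytic work is required and the corollary follows.
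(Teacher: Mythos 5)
Your argument is correct and follows essentially the same route as the paper: a trace-at-initial-time inequality (which the paper quotes directly as Proposition 2.3 of \cite{MM}, while you extract it from the continuity of the first, linear, component of $\mathcal F$ in Theorem \ref{IFT}) combined with time-translation invariance, your use of the fixed-length subinterval $[t,t+\epsilon]$ together with monotonicity of the $P^{\frac{k+r}{2r}}$-norm in the time interval being an equivalent bookkeeping device to the paper's monotonicity of the constant $C(\tau)$ in $\tau$. The only caveat, which you yourself flag, is that routing the trace bound through Theorem \ref{IFT} imports the hypothesis $\frac{k+r}{2r}>\frac{m+6r-2}{4r}$, which is absent from the statement of the corollary; this is harmless for every application made in the paper but makes your proof formally slightly less general than citing the linear trace estimate of \cite{MM} directly.
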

\begin{proof}
By proposition 2.3 in \cite{MM} we have that if $u \in P^{\frac{k+r}{2r}}([0,\tau])$ then there exists $C(\tau)>0$ such that
$$
\|u_0\|_{_{W^{k,2}}}\leq C(\tau) \|u\|_{P^{\frac{k+r}{2r}}([0,\tau])}\,,
$$
On the other hand it is clear that $C(\tau)$ can be chosen so that $C$ is decreasing with $\tau$ and that the previous estimate is translation invariant,
hence the estimate
$$
\|u_0\|_{_{W^{k,2}}}\leq C(\epsilon) \|u\|_{P^{\frac{k+r}{2r}}([0,T-t])}\,,
$$
for every $t\in [0,T-\epsilon]$ implies the statement.
\end{proof}

\section{Short-time existence of flow \eqref{flow}}\label{section3}
In the set-up of the introduction: let $(M,g)$ be an Hermitian manifold with fundamental form $\omega=\sqrt{-1}g_{r\bar s} dz^r\wedge d\bar z^{ s}$. Then, as we have already mentioned in the introduction, the geometric flow \eqref{flow}
can be reduced to a scalar flow by using the substitution 
$$
\omega_t^{n-1}=\omega^{n-1}+\sqrt{-1}\partial \bar\partial(u_t\omega^{n-2})
$$
which leads to the evolution equation
\b\label{scalarflow}
\partial_tu_t=s_{u_t}\,,\quad u_{|t=0}=u_0 \,.
\e

We have the following 
\begin{prop}
$v\mapsto s_v$ is a $4^{th}$-order quasi-linear elliptic operator.
\end{prop}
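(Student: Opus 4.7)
The plan is to exhibit $v\mapsto s_v$ as the composition of three natural operations: the linear second-order map $v\mapsto \sqrt{-1}\partial\bar\partial(v\omega^{n-2})$ producing the difference $\omega_v^{n-1}-\omega^{n-1}$; the pointwise algebraic inverse of $\eta\mapsto\eta^{n-1}$ from positive $(1,1)$-forms onto positive $(n-1,n-1)$-forms, which recovers $\omega_v$, and therefore $g_v$, from $\omega_v^{n-1}$; and the standard formula $s_v=-g_v^{i\bar j}\partial_i\partial_{\bar j}\log\det g_v$. The middle operation is a smooth pointwise diffeomorphism of positive cones, so $g_v$ depends smoothly on $v$ and its derivatives up to order $2$; substitution into the last step introduces two further derivatives, making $s_v$ a differential operator of order $4$, smooth in all its entries.

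To check quasi-linearity and compute the principal symbol I would linearise at an arbitrary $v_0\in C^\infty_\omega(M)$ and work at a point $p$ in holomorphic coordinates where $(g_{v_0})_{k\bar l}(p)=\delta_{kl}$. The fourth derivatives of a perturbation $\dot v$ enter $\dot s_{v_0}[\dot v]$ only through the leading piece $\dot g$ of the induced variation of $g_v$, determined at leading order by
$$
(n-1)\,\dot\omega\wedge\omega_{v_0}^{n-2}\,=\,\sqrt{-1}\,\partial\bar\partial\dot v\wedge\omega_{v_0}^{n-2}\;+\;(\text{lower-order terms in }\dot v)\,.
$$
A short multilinear calculation at $p$, expanding $\omega_{v_0}^{n-2}$ in the elementary $(1,1)$-forms $\alpha_i:=\sqrt{-1}dz^i\wedge d\bar z^i$, shows that wedging with $\omega_{v_0}^{n-2}$ acts as a nonzero scalar multiple of the identity on the space of Hermitian matrices at $p$; inverting gives $\dot g_{k\bar l}=\dot v_{k\bar l}/(n-1)$. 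Substituting into $\dot R_{i\bar j}=-\partial_i\partial_{\bar j}(g_{v_0}^{k\bar l}\dot g_{k\bar l})$ and tracing with $g_{v_0}^{-1}$, the principal symbol of the linearisation reads
$$
\sigma_L(\xi)\,=\,-\,\frac{1}{n-1}\,\bigl(g_{v_0}^{i\bar j}\xi_i\bar\xi_j\bigr)^{2}\,.
$$
This is the unique fourth-order contribution and it is linear in $\dot v$, so the operator is quasi-linear; and the symbol fits \eqref{elliptic} with $r=2$ and $E_1=E_2=(n-1)^{-1/2}g_{v_0}^{-1}$ (positive definite), so ellipticity in the strong sense required by Theorem~\ref{short time existence} holds.

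The main obstacle is the linear-algebra step: a priori the $U(n)$-equivariant endomorphism of Hermitian matrices induced by wedging with $\omega_{v_0}^{n-2}$ admits both an identity part and a trace part with possibly distinct eigenvalues; verifying that these coincide (and equal $n-1$) is what allows one to invert the relation in closed form and arrive at the clean symbol above. Once this is in hand, the remaining assertions follow by routine bookkeeping.
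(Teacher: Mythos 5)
There is a genuine gap, and you have in fact located it yourself: the ``main obstacle'' you defer does not resolve in your favour. The $U(n)$-equivariant map on Hermitian matrices induced by wedging a $(1,1)$-form with $\omega^{n-2}$ is, at a point where $\omega$ is standard, $A\mapsto (n-2)!\,(\mathrm{tr}(A)\,I-A)$ up to transposition; its eigenvalue on the span of the identity is $n-1$ while on trace-free matrices it is $-1$. These do not coincide, so wedging with $\omega_{v_0}^{n-2}$ is \emph{not} a scalar multiple of the identity, and the closed-form inversion $\dot g_{k\bar l}=\dot v_{k\bar l}/(n-1)$ fails. There is a second, compounding slip: the linearised relation should read $(n-1)\,\dot\omega\wedge\omega_{v_0}^{n-2}=\sqrt{-1}\,\partial\bar\partial\dot v\wedge\omega^{n-2}+(\hbox{l.o.t.})$, with the \emph{background} form $\omega^{n-2}$ on the right, not $\omega_{v_0}^{n-2}$. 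Only when $\omega_{v_0}=\omega$ at the point do the two (non-scalar) wedge maps cancel to give your formula. In general one must compose one wedge map with the inverse of the other, and at a point where $g_{h\bar k}=\delta_{hk}$ and $(g_{v_0})_{h\bar k}=\gamma_h\delta_{hk}$ the principal symbol comes out as a product of two \emph{different} positive Hermitian forms, schematically $-c\bigl(\sum_r\gamma_r^{-1}|\xi_r|^2\bigr)\bigl(\sum_k\mu_k|\xi_k|^2\bigr)$ with $\mu_k=\sum_{i\neq k}\lambda_i$ and $\lambda_i$ the eigenvalues of the inverse of the coefficient matrix of $\omega_{v_0}^{n-1}$; it is not $-\frac{1}{n-1}\bigl(g_{v_0}^{i\bar j}\xi_i\bar\xi_j\bigr)^2$ unless all $\gamma_h$ are equal.

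The good news is that the conclusion survives: the required ellipticity \eqref{elliptic} only asks that the fourth-order coefficient factor as $(-1)^{r-1}E_1^{i_1j_1}E_2^{i_2j_2}$ with each $E_k$ positive definite, and the correct symbol does have this form with $E_1$ essentially $g_{v_0}^{-1}$ and $E_2$ a different positive tensor built from the cofactors of $g_{v_0}$. The paper avoids your inversion problem altogether by parametrising the metric through the coefficients $\psi_{i\bar j}$ of $\omega_v^{n-1}$ (which depend linearly on $\partial\bar\partial v$ at top order) and using $\det g_v=(\det\psi)^{1/(n-1)}$ to write $s_v=-\frac{1}{n-1}g_v^{\bar kr}\partial_r\partial_{\bar k}\log\det\psi$; the fourth-order part is then manifestly $-\frac{1}{n-1}g_v^{\bar kr}\psi^{\bar ba}\partial_r\partial_{\bar k}\psi_{a\bar b}$, linear in the top derivatives with the product-type symbol above. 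To repair your argument you should either adopt that route or carry out the composition of the two wedge maps honestly, accepting that the answer involves two distinct positive definite tensors rather than a perfect square.
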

\begin{proof}
For $v$ in $C^{\infty}_{\omega}(M)$, we have 
$$
s_v=-g_{v}^{\bar k r}\partial_r\partial_{\bar k}\,{\rm log}({\rm det}(g_{v}))
$$
where $g_v$ is the Hermitian metric induced by $v$. 
Following \cite{FWW}, we write 
$$
\omega_v^{n-1}=\left[(\sqrt{-1})^{n-1}(n-1)!\right]\, \sum_{i,j=1}^n\psi_{i\bar j}\varepsilon_{i\bar j}\,dz^1\wedge d\bar z^{ 1}\wedge \dots \wedge \widehat{dz^i}\wedge \dots\wedge  \widehat{d\bar z^j}
\wedge \dots \wedge d\bar z^n
$$
where $\varepsilon_{i\bar j}$ is $1$ if $i\leq j$ and it is $-1$ otherwise (the symbol $\,\,\widehat{ }\,\,$ means that the term is omitted). Since 
$$
({\rm det}(g_{v}))=({\rm det}(\psi))^{\frac{1}{n-1}}
$$
we have 
$$
s_v=-\frac{1}{n-1}g_{v}^{\bar k r}\partial_r\partial_{\bar k}\,{\rm log}({\rm det}(\psi))\,.
$$
Now 
$$
\partial_r\partial_{\bar k}\,{\rm log}({\rm det}(\psi))=\partial_r\left(\psi^{\bar ba }\partial_{\bar k}\,(\psi_{a\bar b})\right)
=\psi^{\bar ba }\partial_r\partial_{\bar k}\,(\psi_{a\bar b})-\psi^{\bar b l}\partial_{r}(\psi_{l\bar m})\psi^{\bar m a}\partial_{\bar k}\,(\psi_{a\bar b})
$$
and therefore
$$
s_{v}=-\frac{1}{n-1}g_{v}^{\bar k r}\psi^{\bar b a}\partial_r\partial_{\bar k}\,(\psi_{a\bar b})
+\frac{1}{n-1}g_{v}^{\bar k r}\psi^{\bar b l}\partial_{r}(\psi_{l\bar m})\psi^{\bar m a}\partial_{\bar k}\,(\psi_{a\bar b})\,.
$$
Furthermore we have  
$$
\psi_{i\bar j}=\sum_{\sigma, \tau\in \sigma_{n-1}} |\sigma|\,|\tau|\,\left(v_{\alpha^i_{\sigma(1)}\bar{\alpha}^j_{\tau(1)}}
g_{\alpha^i_{\sigma(2)}\bar{\alpha}^j_{\tau(2)}}\dots g_{\alpha^i_{\sigma(n-1)}\bar{\alpha}^j_{\tau(n-1)}}
\right)+{\rm l.o.t}
$$
where 
$$
(\alpha^i_1,\dots ,\alpha^i_{n-1})=(1,\dots,\hat i,\dots,n)
$$
and {\rm l.o.t.} means \lq\lq lower order terms\rq\rq. 
Now we fix a point $p$ in  $M$ and holomorphic coordinates $\{z^r\}$ around $p$ such that 
$$
g_{h\bar k}=\delta_{hk}\,, \quad (g_v)_{h\bar k}=\gamma_h\delta_{hk}\,,\mbox{ at }p
$$
where $\gamma_h$ are some positive real constants. We easily get 
$$
\psi^{i\bar j}=\lambda_i\delta_{ij} 	\mbox{ at } p\,,  
$$
where $\lambda_i$ are positive and depend on $\gamma_h$.  
An easy computation yields  
$$
s_{v}=-\frac{1}{n-1}\sum_{r,k=1}^n \sum_{i\neq k}\frac{\lambda_i}{\gamma_r} v_{r\bar rk\bar k}+{\rm l.o.t.} \mbox{ at } p\,
$$
and the claim follows.
\end{proof}

Now we can prove the existence and uniqueness of a short-time solution to \eqref{flow}.
Since $Q$ is a quasilinear elliptic operator, theorem \ref{short time existence} implies that \eqref{scalarflow}
has a unique maximal solution $u$ and the corresponding $\omega_{u}$
solves \eqref{flow}. In order to prove that $\omega_u$ is unique, 
let us consider another solution $\omega_{\tilde u}$ to \eqref{flow}.  The function $\tilde u$ solves
\begin{equation}\label{eqninv}
\begin{cases}
\left(\partial_t  \tilde u_t\right)\omega^{n-2}=(s_{\tilde u_t})\, \omega^{n-2}+f_t\,\omega^{n-2}\\
\tilde u_{|t=0}=\tilde u_0\,,
\end{cases}
\end{equation}
where $f_t \in C^\infty(M)$ is smooth in $t$ and is such that
$$
\partial\bar\partial(f_t\,\omega^{n-2})=\partial\bar\partial(\tilde u_0\,\omega^{n-2})=0\,.
$$ 
Let 
$$
u'_t=\tilde u_t-\int_{0}^t f_s\,ds+(u_0-\tilde u_0)\,.
$$
Then $u'$ solves \eqref{scalarflow} and 
$$
\omega_{u'}=\omega_{\tilde u}\,.
$$
Since \eqref{scalarflow} has unique solution, we have $u'=u$ and consequently 
$$
\omega_{\tilde u}=\omega_{u}\,,
$$
as required. 

\section{Proof of theorem \ref{stability}}\label{section4}

In this section we prove theorem \ref{stability}. 
The argument of the proof is inspired by  \cite[section 8]{witt}, where it is proved the stability around torsion-free $G_2$-structures of a certain geometric flow. 

Accordingly to the set-up described in the introduction, let $W^{2r,2}_+(M)$ be an open neighbourhood of $0$ in $W^{2r,2}(M)$ which is invariant by translations by constants and 
let $Q\colon W^{2r,2}_+(M)\to L^{2}(M)$ be a smooth {\em elliptic} operator of order $2r$.
For $v\in  W^{2r,2}_+(M)$, we denote by 
$$
L_v=dQ_{v}\colon W^{2r,2}(M)\to L^{2}(M)
$$
the differential of $Q$ at $v$.  In order to simplify the notation, we write $L$ to denote the differential of $Q$ at $0$. 

\begin{lemma}\label{corIFT}
Let $l>\frac12{\rm dim}\,M+2r-1$. For every $T,\epsilon>0$ there exists  $\delta>0$,  depending on $\epsilon$,  $T$ and $l$, such that 
if $u_0\in W^{r(2l-1),2}(M)$ satisfies 
$$
\|u_0\|_{W^{r(2l-1),2}}<\delta,
$$
then \eqref{pproblem} has a solution $u$ defined in $M\times [0,T]$ such that 
\begin{equation}\label{primastima}
\|u\|_{P^l([0,T])}<\epsilon\,.
\end{equation}
\end{lemma}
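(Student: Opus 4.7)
The plan is to deduce the lemma from a direct application of the inverse function theorem in Banach spaces to the map $\mathcal{F}$ of Theorem \ref{IFT}. First I would check that the hypothesis $l>\tfrac{m}{2}+2r-1$ (with $m=\dim M$) comfortably implies condition \eqref{condizionek} with $k=l$, so that Theorem \ref{IFT}, in the version extended to operators defined on open subsets as pointed out in the Remark following it, yields a map
$$
\mathcal{F}\colon P^{l}([0,T])_+\longrightarrow W^{r(2l-1),2}(M)\times P^{l-1}([0,T])
$$
of class $C^{1}$ with $d\mathcal{F}_{|u}$ an isomorphism at every point of its domain. Here $P^{l}([0,T])_+$ denotes the open subset of $u\in P^{l}([0,T])$ with $u_t\in W^{2r,2}_+(M)$ for every $t\in[0,T]$. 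Hypothesis (h1) gives $Q(0)=0$, hence $\mathcal{F}(0)=(0,0)$.

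The Banach-space inverse function theorem then provides neighbourhoods $U\ni 0$ in $P^{l}([0,T])_+$ and $V\ni(0,0)$ in $W^{r(2l-1),2}(M)\times P^{l-1}([0,T])$ between which $\mathcal{F}$ is a $C^{1}$-diffeomorphism; in particular $\mathcal{F}^{-1}$ is continuous at $(0,0)$. Given $\epsilon>0$, after possibly shrinking $V$ I can arrange $\mathcal{F}^{-1}(V)\subset U\cap\{\|\cdot\|_{P^{l}([0,T])}<\epsilon\}$. Choosing $\delta>0$ small enough that $\|u_0\|_{W^{r(2l-1),2}}<\delta$ implies $(u_0,0)\in V$, the function $u:=\mathcal{F}^{-1}(u_0,0)$ solves \eqref{pproblem} in $P^{l}([0,T])_+$ and satisfies the required bound $\|u\|_{P^{l}([0,T])}<\epsilon$.

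The only point requiring a little care, and hence the main (mild) obstacle, is to check that $P^{l}([0,T])_+$ is indeed open in $P^{l}([0,T])$, so that the local nature of the inverse function theorem is compatible with the fact that $Q$ is only defined on the open set $W^{2r,2}_+(M)$. This follows from the continuous Sobolev embedding $W^{r(2l-1),2}(M)\hookrightarrow W^{2r,2}(M)$ (the exponent $l>\tfrac{m}{2}+2r-1$ is chosen precisely to make this and the $C^0$-embedding hold with room to spare), combined with a trace-type estimate $\sup_{t\in[0,T]}\|u_t\|_{W^{2r,2}}\leq C\|u\|_{P^{l}([0,T])}$ in the spirit of Corollary \ref{tracetheorem}. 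Openness of $W^{2r,2}_+(M)$ then implies that uniform closeness of the trajectory $t\mapsto u_t$ to the origin keeps it inside $W^{2r,2}_+(M)$ throughout, so the whole argument is consistent.
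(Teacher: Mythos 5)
Your argument is correct and is essentially the paper's own proof: both invert the map $\mathcal{F}$ of Theorem \ref{IFT} near the origin (the differential being an isomorphism and $\mathcal{F}(0)=(0,0)$), and then read the bound \eqref{primastima} off the continuity of $\mathcal{F}^{-1}$ at $(0,0)$, with $u=\mathcal{F}^{-1}(u_0,0)$. Your additional check that the domain $P^{l}([0,T])_+$ is open via the trace estimate is a detail the paper delegates to the Remark following Theorem \ref{IFT}, but it does not change the route.
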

\begin{proof}
Fix $T,\epsilon>0$. Using notation of theorem \ref{IFT}, we have 
$\mathcal F(0)=(0,0)$. The same theorem, via the implicit function theorem, implies that $\mathcal F$ is a homeomorphism from an open neighbourhood $\mathcal U$  of $0$ in $P^l([0,T])$ to a neighbourhood $\mathcal V$ of $(0,0)$ in $W^{r(2l-1),2}(M)\times P^{l-1}([0,T])$.
Now the continuity of $\mathcal F^{-1}$ means that for every $\epsilon>0$ we can find $\delta>0$ such that if $\|u_0\|_{W^{r(2l-1),2}(M)}<\delta$ we have $\|\mathcal F^{-1}(u_0,0)\|_{P^l([0,T])}<\epsilon$. Note that $\mathcal F^{-1}(u_0,0)$ is nothing but the solution $u$ of the problem \eqref{pproblem} with initial value $u_0$ and the claim follows.
\end{proof}

Note that the choice of $l$ in the previous lemma allows us to apply both the Sobolev embedding theorem and theorem \ref{IFT}. 
From now on we fix $k>\frac12{\rm dim}\,M+2r-1$ and consider $Q$ as an operator defined on $W^{k,2}_+(M)$.

Now we show that when the initial datum is close enough to $0$, then the solution to \eqref{parabolic} converges exponentially fast to  a point in $Q^{-1}(0)$. This part of the proof is obtained showing that $W^{k,2}$-norm of the solution decreases exponentially. 
 From now on we assume that $Q$ satisfies conditions (h1)--(h3) described in the introduction. 

\begin{lemma}
Near the origin the set $Q^{-1}(0)$ is a segment $(-a,a)$. 
\end{lemma}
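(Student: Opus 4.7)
The plan is to apply the inverse function theorem to a suitable restriction of $Q$, after first quotienting out the direction of symmetry supplied by (h1). Every $v\in W^{2r,2}_+(M)$ decomposes uniquely as $v = c+v_0$ with $c\in\mathbb R$ the $g$-average of $v$ and $v_0\in W^{2r,2}_0(M)$. By (h1), $Q(v)=Q(v_0)$, so the zero set of $Q$ near the origin is completely determined by that of $\tilde Q:=Q|_{W^{2r,2}_0(M)\cap W^{2r,2}_+(M)}$. Thus the problem reduces to showing that $0$ is an isolated zero of $\tilde Q$; once this is established, a neighborhood of $0$ in $Q^{-1}(0)$ consists exactly of the constants $c\in(-a,a)$ for some $a>0$.

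The first step is to verify that $L_0:=L|_{W^{2r,2}_0(M)}\colon W^{2r,2}_0(M)\to L^2_0(M)$ is a topological linear isomorphism. Injectivity and the containment of the image in $L^2_0(M)$ follow directly from (h2). For surjectivity, the strong ellipticity assumed for $Q$ passes to its linearization, so $L\colon W^{2r,2}(M)\to L^2(M)$ is Fredholm; the symmetry in (h3) forces the image to coincide with the $L^2$-orthogonal complement of the kernel, i.e.\ with $L^2_0(M)$. Boundedness of $L_0^{-1}$ follows from the open mapping theorem.

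The second step is the inverse function theorem applied to
$$
F:=L_0^{-1}\circ\pi\circ\tilde Q\colon W^{2r,2}_0(M)\cap W^{2r,2}_+(M)\longrightarrow W^{2r,2}_0(M),
$$
where $\pi\colon L^2(M)\to L^2_0(M)$ is the orthogonal projection onto the mean-zero functions. Since $L|_{W^{2r,2}_0(M)}$ already takes values in $L^2_0(M)$, on which $\pi$ acts as the identity, one computes $dF_0=L_0^{-1}\circ L_0=\mathrm{id}$. Hence $F$ is a local $C^\infty$-diffeomorphism at $0$, so there is an open neighborhood $V$ of $0$ in $W^{2r,2}_0(M)$ with $F^{-1}(0)\cap V=\{0\}$. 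As $\tilde Q(v_0)=0$ implies $F(v_0)=0$, we conclude $\tilde Q^{-1}(0)\cap V=\{0\}$, which combined with the reduction above yields the lemma.

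The main obstacle is the identification $\mathrm{Im}\,L=L^2_0(M)$ in the first step: it crucially requires combining Fredholmness (from strong ellipticity) with self-adjointness (from (h3)). Without (h3) one would only know that the image is closed of finite codimension, and identifying it precisely with $L^2_0(M)$ would not be automatic; it is this identification that lets us invert $L_0$ after projecting $Q$ onto $L^2_0(M)$ and so set up the inverse function theorem with differential equal to the identity.
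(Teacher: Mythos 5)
Your proof is correct and follows essentially the same route as the paper's: quotient out the constant direction using (h1), show the linearization restricted to mean-zero functions is an isomorphism onto $L^2_0(M)$, and apply the inverse function theorem to conclude that $0$ is the only nearby mean-zero zero of $Q$. The only differences are presentational — you compose with $L_0^{-1}$ to make the differential the identity and you spell out the surjectivity of $L_0$ via Fredholmness plus symmetry, a point the paper attributes somewhat tersely to (h2) alone.
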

\begin{proof}
Let $\tilde Q\colon W_0^{k,2}(M)\to W_0^{k-2r,2}(M)$ be defined as 
$$
\tilde Q(u)=Q(u)-\frac{1}{{\rm Vol}_g(M)}\int_M Q(u)\,dV_g\,.
$$
Then $\tilde Q$ is a differentiable operator whose derivative at $0$ is an isomorphism in view of  (h2).
The implicit function theorem implies that $\tilde Q$ is a bijection between an open  neighbourhood of $0$ in $W_0^{k,2}(M)$ and an open neighborhood of $0$ in $W_0^{k-2r,2}(M)$ and condition (h1) implies the statement. 
\end{proof}

Next we observe that $Q\colon W^{k,2}_+(M)\to L^{2}(M)$ is differentiable in the Fr\'echet sense.
\begin{lemma}
$Q\colon W^{k,2}_+(M)\to L^{2}(M)$ is Fr\'echet differentiable at $0$, i.e.  for every $\epsilon>0$ there exists $\delta>0$ such that if $v\in W^{k,2}_+(M)$ satisfies $\|v\|_{W^{k,2}}\leq \delta$, then
$$
\|Q(v)-L(v)\|_{L^{2}}\leq \epsilon \|v\|_{W^{k,2}}\,. 
$$   
\end{lemma}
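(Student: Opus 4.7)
The plan is to exploit the quasilinear structure of $Q$ described in Section \ref{section2} together with the Sobolev embedding $W^{k,2}(M)\hookrightarrow C^{2r-1}(M)$, which is available because we fixed $k>\frac{1}{2}\dim M+2r-1$. In local coordinates we have
$$
Q(v)=A^{i_1\dots i_{2r}}(x,v,\nabla v,\dots,\nabla^{2r-1}v)\,\nabla^{2r}_{i_1\dots i_{2r}}v+b(x,v,\nabla v,\dots,\nabla^{2r-1}v),
$$
with $A^{I}$ and $b$ smooth in all entries, so that the linearization at $0$ reads
$$
L(v)=A^{I}(x,0,\dots,0)\,\nabla^{2r}_{I}v+\sum_{j=0}^{2r-1}\partial_{\psi_j}b(x,0,\dots,0)\cdot \nabla^{j}v.
$$
Subtracting, I split the error into a top-order piece and a lower-order piece:
$$
Q(v)-L(v)=\bigl[A^{I}(x,v,\dots,\nabla^{2r-1}v)-A^{I}(x,0,\dots,0)\bigr]\,\nabla^{2r}_{I}v+R_{b}(v),
$$
where $R_b(v)$ is the second-order Taylor remainder of $b$ at $0$ evaluated along the jets of $v$.

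For the top-order piece, since $A^I$ is smooth and depends on arguments bounded by $\|v\|_{C^{2r-1}}\le C\|v\|_{W^{k,2}}\le C\delta$, the mean value theorem yields
$$
\bigl|A^{I}(x,v,\dots,\nabla^{2r-1}v)-A^{I}(x,0,\dots,0)\bigr|\le C\sum_{j=0}^{2r-1}|\nabla^{j}v|\le C'\|v\|_{W^{k,2}}
$$
pointwise on $M$; multiplying by $\nabla^{2r}v$, which is $L^2$-controlled by $\|v\|_{W^{k,2}}$, and using the Cauchy-Schwarz (pointwise) bound I obtain the $L^2$-estimate
$$
\bigl\|\bigl[A^{I}(\cdot,v,\dots)-A^{I}(\cdot,0,\dots)\bigr]\nabla^{2r}_{I}v\bigr\|_{L^{2}}\le C\|v\|_{W^{k,2}}^{2}.
$$
For the lower-order piece, Taylor's formula applied to $b$ gives the pointwise bound
$$
|R_{b}(v)|\le C\sum_{j=0}^{2r-1}|\nabla^{j}v|^{2},
$$
and integrating, while using $\|\nabla^{j}v\|_{L^{4}}^{2}\le \|\nabla^{j}v\|_{C^{0}}\|\nabla^{j}v\|_{L^{2}}\le C\|v\|_{W^{k,2}}^{2}$ for $j\le 2r-1$, yields $\|R_{b}(v)\|_{L^{2}}\le C\|v\|_{W^{k,2}}^{2}$ as well.

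Combining the two bounds gives $\|Q(v)-L(v)\|_{L^{2}}\le C\|v\|_{W^{k,2}}^{2}\le C\delta\|v\|_{W^{k,2}}$, and choosing $\delta<\epsilon/C$ concludes the proof. The main obstacle is handling the top-order contribution, because a naive estimate would lose a derivative; the decisive point is that the Sobolev embedding provides uniform $C^0$ control on the first $2r-1$ derivatives of $v$, so the smoothness of $A^{I}$ in these arguments furnishes a factor proportional to $\|v\|_{W^{k,2}}$ that absorbs the mismatch and multiplies the $L^2$ norm of the $2r$-th derivative.
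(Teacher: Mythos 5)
Your proof is correct, but it follows a genuinely different route from the paper. The paper does not estimate $Q(v)-L(v)$ directly: it invokes the classical functional-analytic fact (cited from Vainberg) that a G\^ateaux derivative which is continuous at a point yields Fr\'echet differentiability there, and then verifies that $v\mapsto L_v$ is continuous at $0$ as a map $W^{k,2}_+(M)\to\mathcal L(W^{k,2},L^2)$ --- the latter being justified in one line by the smoothness of the coefficients of $Q$ together with the Sobolev embedding. You instead perform the Taylor expansion by hand, splitting $Q(v)-L(v)$ into the top-order piece $[A^I(x,v,\dots,\nabla^{2r-1}v)-A^I(x,0,\dots,0)]\nabla^{2r}_I v$ and the remainder of $b$, and controlling each via the embedding $W^{k,2}\hookrightarrow C^{2r-1}$ (valid precisely because $k>\tfrac12\dim M+2r-1$). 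Both arguments rest on the same two inputs --- the quasilinear structure of $Q$ and the Sobolev embedding --- but yours is more self-contained and quantitatively sharper: it produces the bound $\|Q(v)-L(v)\|_{L^2}\le C\|v\|_{W^{k,2}}^2$, i.e.\ a quadratic error, rather than the mere $o(\|v\|_{W^{k,2}})$ statement of the lemma, whereas the paper's route is shorter at the cost of outsourcing the work to a black-box theorem and to an unproved continuity claim of essentially the same analytic content. Two small points worth making explicit in your write-up: the constants coming from the mean value theorem and from the second-order Taylor remainder of $b$ are uniform because the arguments $(v,\nabla v,\dots,\nabla^{2r-1}v)$ range over a fixed compact set once $\|v\|_{C^{2r-1}}\le C\delta\le C$, using compactness of $M$; and for $\delta$ small the whole segment $\{sv:s\in[0,1]\}$ stays in $W^{k,2}_+(M)$, so the Taylor expansion along it is legitimate.
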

\begin{proof}
In view of a classical result (see e.g. \cite{Vainberg}), it is enough to show that the G$\hat{{\rm a}}$teaux derivative of $Q$ is continuous at $0$. Namely, let $X=W^{k,2}(M)$ and $Y=L^2(M)$ and $\mathcal  L(X,Y)$ be the set of continuous linear maps from $X$ to $Y$.  Let $Q'\colon W_+^{k,2}(M)\to \mathcal  L(X,Y)$ be the map  $Q'(v)=L_v$ (note that $Q'(0)=L$). Then the continuity of $Q'$ at $0$ is equivalent to require that  for every $\epsilon>0$ there exists a positive $\delta$ such that 
$$
\|L_v(w)-L(w)\|_{L^2}\leq \epsilon\|w\|_{W^{k,2}} 
$$
for every  $w\in W^{k,2}(M)$ and $v\in W^{k,2}_+(M)$ such that $\|v\|_{W^{k,2}}\leq \delta $. The last inequality is implied by the smoothness of the coefficients of $Q$ 
and by the Sobolev embedding theorem which ensures that the coefficients of $L_v$ are continuous.
\end{proof}

The following lemma is based on a general theorem about symmetric $T$-bounded operators  (see theorem \ref{Tbound} in the appendix at the end of the present paper).

\begin{lemma}\label{4.4}
For every $\epsilon>0$ there exists $\delta>0$ such that if $v\in W^{k,2}_+(M)$ satisfies $\|v\|_{W^{k,2}}<\delta$, then 
\b\label{otto}
-\langle L_{v}(z),z\rangle_{L^2} \geq (1-\epsilon) \langle -L(z),z\rangle_{L^2}-\epsilon \|z\|^2_{L_2}
\e
for every $z\in W^{2r,2}(M)$. 
\end{lemma}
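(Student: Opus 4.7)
The strategy is to view $L_v = L + M_v$, with $M_v := L_v - L$, as a perturbation of the self-adjoint elliptic operator $L$. Because the antisymmetric part of $M_v$ (as an operator on $L^2$) contributes nothing to $\langle M_v z, z\rangle_{L^2}$ on real-valued $z$, the conclusion \eqref{otto} is equivalent to the form bound
\begin{equation*}
|\langle M_v z, z\rangle_{L^2}| \leq \epsilon\,\langle -Lz, z\rangle_{L^2} + \epsilon\,\|z\|^2_{L^2},
\end{equation*}
and this is precisely what Theorem \ref{Tbound} of the appendix produces, via the standard Kato manipulation, from an operator-norm bound $\|M_v z\|_{L^2} \leq a(\delta)\|Lz\|_{L^2} + b(\delta)\|z\|_{L^2}$ in which $a(\delta)$ can be made arbitrarily small. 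The plan therefore reduces to establishing such a relative bound for $M_v$ with bound vanishing as $\|v\|_{W^{k,2}} \to 0$.

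To do so, I would argue in two steps. First, the coefficients of the linearization $L_v$ are smooth functions of $v,\nabla v,\ldots,\nabla^{2r-1} v$. The Sobolev embedding $W^{k,2}(M)\hookrightarrow C^{2r-1}(M)$, valid since $k > \tfrac{1}{2}\dim M + 2r-1$, then yields a modulus $\eta(\delta)\to 0$ with
\begin{equation*}
\|M_v w\|_{L^2} \leq \eta(\delta)\,\|w\|_{W^{2r,2}} \qquad \forall\, w\in W^{2r,2}(M),\ \|v\|_{W^{k,2}} < \delta.
\end{equation*}
Second, because $L$ is a symmetric elliptic operator of order $2r$, standard elliptic regularity gives $\|w\|_{W^{2r,2}}\leq C(\|Lw\|_{L^2}+\|w\|_{L^2})$, so combining with the previous inequality produces the required $L$-relative bound $\|M_v z\|_{L^2} \leq C\eta(\delta)(\|Lz\|_{L^2}+\|z\|_{L^2})$.

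An application of Theorem \ref{Tbound} with $T=L$ and perturbation (the symmetric part of) $M_v$ then yields the desired form bound, and the identity $-\langle L_v z, z\rangle_{L^2} = \langle -Lz,z\rangle_{L^2} - \langle M_v z, z\rangle_{L^2}$ immediately delivers \eqref{otto} after relabelling $\epsilon$. The main obstacle is the first of the two steps above: the preceding Fr\'echet-differentiability lemma controls $M_v$ only as an operator $W^{k,2}\to L^2$, whereas combining with elliptic regularity of $L$ genuinely requires control $W^{2r,2}\to L^2$. Closing this gap requires extracting $C^0$-smallness of the coefficients of $M_v$, not merely their $L^2$-smallness, and this is exactly where the hypothesis $k > \tfrac{1}{2}\dim M + 2r-1$ plays its essential role.
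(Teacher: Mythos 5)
Your proposal is correct and follows essentially the same route as the paper: the paper likewise invokes Theorem \ref{Tbound} (taking $T=-\epsilon L$ and perturbation $V_v=L-\tfrac12(L_v+L_v^*)$, so that the lower bound $-\epsilon$ for $T+V_v$ is literally \eqref{otto}), derives the relative bound from continuity of $v\mapsto L_v$ as a map into $\mathcal L(W^{2r,2},L^2)$ via Sobolev embedding, and uses elliptic regularity of $L$ --- splitting $z$ along $\ker L\oplus(\ker L)^{\perp}$ rather than carrying the extra $\|z\|_{L^2}$ term, which is an equivalent device. The only adjustments needed are cosmetic: the operator handed to Theorem \ref{Tbound} must be $-\epsilon L$ (not $L$, which is bounded above rather than below), and both coefficients in your relative bound $\|M_vz\|_{L^2}\leq C\eta(\delta)(\|Lz\|_{L^2}+\|z\|_{L^2})$ must be small, which your estimate indeed provides.
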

\begin{proof} For notation used in this proof see the appendix.
Fix $\epsilon >0$. Let $H=L^{2}(M)$ and consider the operators on $H$, 
$T:=-\epsilon L$ and $V_v:=L-\frac{1}{2}(L_v+L^*_v)$, with $v \in W^{k,2}_+(M)$. We take $D(T)=D(V_v)=W^{2r,2}(M)$. Condition (h3) implies that $T$ is symmetric and bounded from below (with $\gamma_T=0$). 
Elliptic regularity of $T$ implies also that there exists  $C>0$ such that $\|z\|_{W^{2r,2}} \leq C \|T(z)\|_{L^2}$ for all $z \in W^{2r,2}_0(M)$.
Moreover reasoning as in the proof of the previous lemma we deduce that $v \mapsto V_v$ is continuous as a map $W^{k,2}_+ \to \mathcal L(W^{2r,2},L^2)$. Now let us write $z=z_0+z_1$ according to the decomposition $W^{2r,2}(M)=\ker L \oplus (\ker L)^\perp$. Thus we can find $\delta>0$ such that if $\|v\|_{W^{k,2}}\leq \delta $, we have
$$
\|V_v(z_1)\|_{L^2}\leq b C^{-1} \|z_1\|_{W^{2r,2}} \leq b \|T(z)\|_{L^2}
$$
for every $z\in W^{k-2r,2}(M)$, with $b>0$ arbitrarily small. Consequently 
$$
\|V_v(z)\|_{L^2}\leq \frac12 \|L_v(z_0)+L^*_v(z_0)\|_{L^2} + \|V_v(z_1)\|_{L^2} \leq a\|z\|_{L^2} + b \|T(z)\|_{L^2}
$$
with $a>0$ arbitrarily small. Taking $a=\frac{\epsilon}{2}$, $b=\frac12$ and using \eqref{gamma} we have 
that $-\epsilon$ is a lower bound for $T+V$, hence the desired inequality.
\end{proof} 

Next we show that under our assumptions, $Q(u_t)$ has an $L^2$-exponential decay. 
From now on when $I$ is a time interval, we denote by $C^\infty_+(M\times I)$ the set $\{u \in C^\infty(M\times I)\colon u_t \in C_+^\infty(M) 
\,\, {\rm for} \,\, {\rm every} \,\, t\in I\}$.

\begin{lemma}\label{L2expdecay}
There exists $\delta>0$ such that if $u\in C^{\infty}_+(M\times [0,T])$ solves \eqref{parabolic} and satisfies
$$
\| u_t\|_{W^{k,2}}< \delta\,,\mbox{ for every }t\in [0,T]\,, 
$$
then 
$$
\|Q(u_t)\|_{L^2}^2\leq {\rm e}^{-\lambda_1 t} \|Q(u_0)\|_{L^2}^2\,,
$$
where $\lambda_1$ is the first positive eigenvalue of $-L$. 
\end{lemma}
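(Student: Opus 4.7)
The plan is a Gr\"onwall-type energy estimate applied to $q_t := Q(u_t)$. Differentiating the flow equation in $t$ via the chain rule gives $\partial_t q_t = dQ_{u_t}(\partial_t u_t) = L_{u_t}(q_t)$, so
$$\frac{1}{2}\frac{d}{dt}\|q_t\|_{L^2}^2 = \langle L_{u_t}(q_t), q_t\rangle_{L^2}.$$
The goal is to bound the right-hand side above by a suitably negative multiple of $\|q_t\|_{L^2}^2$ and then conclude by Gr\"onwall.

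To this end I would invoke Lemma \ref{4.4}: for any $\eta>0$ prescribed, choosing $\delta>0$ small enough so that $\|u_t\|_{W^{k,2}}<\delta$ throughout $[0,T]$ yields
$$\langle L_{u_t}(q_t), q_t\rangle_{L^2} \leq -(1-\eta)\langle -L(q_t), q_t\rangle_{L^2} + \eta\|q_t\|_{L^2}^2.$$
Conditions (h2) and (h3), together with standard self-adjoint elliptic theory, make $-L$ a nonnegative self-adjoint operator on $L^2(M)$ with $\ker(-L)=\R\cdot 1$ and discrete spectrum $0=\lambda_0<\lambda_1\leq\dots$, so $\langle -L(z), z\rangle_{L^2} \geq \lambda_1\|z\|_{L^2}^2$ for every $z\in W^{2r,2}(M)\cap L^2_0(M)$.

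To combine these two inputs I need $q_t\perp \ker L$, i.e.\ $q_t$ mean-zero. Hypothesis (h1) enters here: differentiating the identity $Q(v+a)=Q(v)$ in $a$ at $a=0$ gives $L_v(1)=0$ for every $v$, so $L_{u_t}$ annihilates constants and the equation $\partial_t q_t = L_{u_t}(q_t)$ depends only on the mean-zero component $p_t := q_t - \bar q_t$. A short computation using $\int_M p_t\,dV_g=0$ then gives
$$\frac{1}{2}\frac{d}{dt}\|p_t\|_{L^2}^2 = \langle L_{u_t}(p_t), p_t\rangle_{L^2},$$
and combining Lemma \ref{4.4} with the spectral gap on $p_t\in L^2_0$, while taking $\eta$ small, produces $\frac{d}{dt}\|p_t\|_{L^2}^2 \leq -\lambda_1 \|p_t\|_{L^2}^2$; Gr\"onwall then closes the argument for the mean-zero part.

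The main obstacle is reconciling this estimate on $p_t$ with the full $\|q_t\|_{L^2}^2 = \|p_t\|_{L^2}^2 + {\rm Vol}(M)\,\bar q_t^2$ appearing in the statement. Equivalently, one must control the mean $\bar q_t = \frac{1}{{\rm Vol}(M)}\int_M Q(u_t)\,dV_g$ along the flow: either by showing that it remains identically zero (which would follow from verifying that $L_{u_t}$ maps $L^2_0$ into $L^2_0$, derivable as an $L^2$-perturbation of the self-adjointness of $L$) or by bounding $|\bar q_t|$ directly in terms of $\|p_t\|_{L^2}$. This is the delicate point the proof must address; once it is in place, the exponential decay follows immediately from the Gr\"onwall argument above.
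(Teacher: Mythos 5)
Your overall strategy --- differentiate $\|Q(u_t)\|_{L^2}^2$ along the flow, apply Lemma \ref{4.4} to compare $L_{u_t}$ with $L$, use the spectral gap of $-L$ on mean-zero functions, and close with Gr\"onwall --- is exactly the paper's. But you have correctly located, and then left unresolved, the one step that carries the real technical content of the proof: controlling the mean $\bar q_t=\frac{1}{{\rm Vol}_g(M)}\int_M Q(u_t)\,dV_g$. Of the two fixes you sketch, the first does not work: $\bar q_t$ is \emph{not} identically zero in general, since already $\bar q_0$ has no reason to vanish, and hypothesis (h2) only asserts that $L=L_0$ preserves $L^2_0$, not that $Q$ itself or $L_v$ for $v\neq 0$ do. So the argument must go through your second option, a quantitative bound of the mean part by the full $L^2$-norm of $Q(u_t)$, and that bound is not free --- it is where (h1), (h2), the Fr\'echet differentiability of $Q$ at $0$, and elliptic regularity all enter simultaneously.

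Here is how the paper closes the gap. Write $Q(v)=A+B$ with $A$ the mean and $B\in L^2_0$. Since $Q(0)=0$ and $Q$ is Fr\'echet differentiable at $0$, for every $\epsilon>0$ one has $\|Q(v)-L(v)\|_{L^2}\leq\epsilon\|v\|_{W^{k,2}}$ when $\|v\|_{W^{k,2}}<\delta$; by (h1) one may assume $v$ mean-zero, and then elliptic regularity together with (h2) give $\|v\|_{W^{k,2}}\leq C\|L(v)\|_{L^2}$. Since $\int_M L(v)\,dV_g=0$ by (h2),
$$
\Bigl|\int_M Q(v)\,dV_g\Bigr|=\Bigl|\int_M \bigl(Q(v)-L(v)\bigr)\,dV_g\Bigr|\leq \sqrt{{\rm Vol}_g(M)}\,\epsilon C\|L(v)\|_{L^2}\leq \frac{\epsilon C\sqrt{{\rm Vol}_g(M)}}{1-\epsilon C}\|Q(v)\|_{L^2}\,,
$$
so for $\delta$ small enough $\|A\|_{L^2}\leq\frac14\|Q(v)\|_{L^2}$, hence $\|B\|_{L^2}\geq\frac34\|Q(v)\|_{L^2}$ and $-\langle L(Q(u_t)),Q(u_t)\rangle_{L^2}=-\langle L(B_t),B_t\rangle_{L^2}\geq\frac{9}{16}\lambda_1\|Q(u_t)\|_{L^2}^2$. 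Feeding this into Lemma \ref{4.4} applied to $z=Q(u_t)$ itself (there is no need to pass to the mean-zero part $p_t$) and taking $\epsilon'$ small gives $\frac{d}{dt}\|Q(u_t)\|_{L^2}^2\leq-\lambda_1\|Q(u_t)\|_{L^2}^2$ directly, which also produces the constant $1$ in the stated inequality --- something your route (decay of $\|p_t\|_{L^2}$ plus a separate bound on $\bar q_t$) would only recover up to a multiplicative constant.
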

\begin{proof}
We have
$$
\frac{d}{dt}\frac12 \|Q(u_t)\|_{L^2}^2=\langle L_{u_t}(Q(u_t)),Q(u_t) \rangle_{L^2}\,. 
$$
Hypothesis (h2) implies that  
$$
-\langle L(v),v\rangle_{L^2}\geq \lambda_1 \|v\|_{L^2}^2
$$
for all $v\in W^{k,2}_0(M)$. 
We can write $Q(u_t)=A_t+B_t$
according to the orthogonal splitting 
$L^2(M,\R)=\R\oplus L^2_0(M)$, i.e., 
$$
A_t=\frac{1}{{\rm Vol}_g(M)}\int_M Q(u_t)\,dV_g\,,\quad B_t=Q(u_t)-\frac{1}{{\rm Vol}_g(M)}\int_M Q(u_t)\,dV_g\,. 
$$
Then 
$$
-\langle L(Q(u_t)),Q(u_t)\rangle_{L^2}=-\langle L(B_t),B_t\rangle_{L^2}\geq \lambda_1
\|B_t\|^2_{L^2}
$$
which implies 
\b\label{-1r}
-\langle L(Q(u_t)),Q(u_t)\rangle_{L^2}\geq\lambda_1\left(\|Q(u_t)\|_{L^2}-\,{\rm Vol}_g(M)^{-\frac12}\,\left|\int_MQ(u_t)\,dV_g\right|\right)^2
\e 
Next we observe that for every $\epsilon >0$ there exists $\delta>0$ such that if $\|v\|_{W^{k,2}}<\delta $, then 
$$
\left|\int_M Q(v) dV_g \right|\leq \epsilon \, \|Q(v)\|_{L^2}\,.
$$
Indeed, since $Q(0)=0$ and $Q$ is Fr\'echet  differentiable at $0$, we have that for every $\epsilon>0$ 
there exists $\delta>0$ such that 
$$
\| Q(v)-L(v)\|_{L^{2}}\leq \epsilon  \|v\|_{W^{k,2}}
$$
for every $v\in W^{k,2}(M)$ such that $\|v\|_{W^{k,2}}<\delta$. 
Furthermore,  
elliptic regularity and assumption (h2) imply that 
$$
L\colon W^{k,2}_0(M)\to L^{2}_0(M)
$$
is an isomorphism on the image  and  then there exists a constant $C$ such that 
$$
\frac{1}{C}\|L(v)\|_{L^2}\leq  \|v\|_{W^{k,2}}\leq C\|L(v)\|_{L^{2}}
$$
fore every $v\in W^{k,2}_0(M)$. Therefore  
$$
\| Q(v)-L(v)\|_{L^{2}}\leq \epsilon  C\|L(v)\|_{L^{2}}
$$
for every $v\in W^{k,2}(M)$ such that $\|v\|_{W^{k,2}}<\delta$; and so, using (h2) again, we have 
\begin{multline*}
\left|\int_M Q(v) dV_g \right|= \left|\int_M [Q(v)-L(v)] dV_g \right|\leq \sqrt{{\rm Vol}_g(M)}\|Q(v)-L(v)\|_{L_2}\leq \\
\epsilon C \|L(v)\|_{L^2}\leq \frac{\epsilon C}{1-\epsilon C}\|Q(v)\|_{L^2}\,,
\end{multline*}
where to obtain the last inequality we used that 
$$
\|Q(v)\|_{L^2} \geq \|L(v)\|_{L^2}-\|Q(v)-L(v)\|_{L^2}\geq (1-\epsilon C)\|L(v)\|_{L^2}\,. 
$$

And so for a suitable choice of $\epsilon$ we find  $\delta>0$ such that if $\|v\|_{W^{k,2}}<\delta$, then 
$$
\left|\int_M Q(v) dV_g \right|\leq \frac{\sqrt{{\rm Vol}_g(M)}}{4}\|Q(v)\|_{L^2}\,,
$$   
and equation \eqref{-1r} implies 
$$
-\langle L(Q(u_t)),Q(u_t)\rangle_{L^2}\geq  \lambda_1 \frac{9}{16}\|Q(u_t)\|_{L^2}^2\,.
$$
Now lemma \ref{4.4} says that, for every $\epsilon' >0$, up to taking a smaller $\delta$ we have 
$$
\begin{aligned}
-\langle L_{u_t}Q(u_t),Q(u_t)\rangle_{L^2} \geq &\,(1-\epsilon')
\langle -L(Q(u_t)),Q(u_t)\rangle_{L^2}-\epsilon' \|Q(u_t)\|^2_{L_2}\\
\geq\,& 
\left((1-\epsilon')\lambda_1 \frac{9}{16}-\epsilon' \right)\|Q(u_t)\|_{L^2}^2\,.
\end{aligned}
$$
Choosing $\epsilon'$ small enough we obtain
$$
\langle L_{u_t}Q(u_t),Q(u_t)\rangle_{L^2} \leq  -\frac{\lambda_1}{2} \|Q(u_t)\|^2_{L^2}
$$
which yields
$$
\frac{d}{dt} \|Q(u_t)\|_{L^2}^2 \leq
-\lambda_1 \|Q(u_t)\|^2_{L^2}\,.
$$
Hence Gronwall's lemma implies
$$
\|Q(u_t)\|_{L^2}^2 \leq
{\rm e}^{-\lambda_1t} \|Q(u_0)\|^2_{L^2}\,,
$$
as required. 
\end{proof}

The following lemma is a version with compact time of lemma 7.13 in \cite{HP}.


\begin{lemma} \label{indipHam}
Let $t_0\geq 0, T>0$ and $I=[t_0,t_0+T]$. 
For every $h>0$ there exist $l \geq k$ and $\delta>0$  such that 
for  $u\in C^{\infty}_+(M\times I) \cap B^{P^{l}(I)}(\delta)$ and $w\in P^{h+1}(I)$ we have 
\b
\label{viola}
\|w\|_{P^{h+1}(I)}\leq C\left(\|w_{t_0}\|_{W^{r(2h+1),2}}+\|\partial_t w-L_u(w)\|_{P^{h}(I)}\right)
\e
for some 
$C=C(n,h,T,\delta)$,
\end{lemma}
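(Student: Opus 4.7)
The plan is to reduce the estimate to the constant-coefficient case $L=L_0$ and then to close it by a standard perturbation/absorption argument. First I apply Theorem~\ref{IFT} with $Q=L$: since $L$ is linear and autonomous, its differential at every point is $L$ itself, and a simple time translation lets one work on $I=[t_0,t_0+T]$ instead of $[0,T]$. Provided $h+1$ satisfies \eqref{condizionek} (which we may assume without loss of generality by choosing $l$ large and interpolating back for smaller $h$), the theorem says that $w\mapsto (w_{t_0},\partial_t w-L(w))$ is a topological isomorphism $P^{h+1}(I)\to W^{r(2h+1),2}(M)\times P^h(I)$. Hence there is a constant $C_0=C_0(h,T)>0$ with
$$
\|w\|_{P^{h+1}(I)}\leq C_0\Bigl(\|w_{t_0}\|_{W^{r(2h+1),2}}+\|\partial_t w-L(w)\|_{P^{h}(I)}\Bigr).
$$

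Next I write $\partial_t w-L(w)=\bigl(\partial_t w-L_u(w)\bigr)+(L_u-L)(w)$ and use the triangle inequality. The quasilinear structure of $Q$ implies that $L_u-L$ is a linear differential operator of order $2r$ whose coefficients are smooth functions of $u$ and of its spatial derivatives up to order $2r-1$ which vanish when $u=0$. The key claim is that there exist $l\geq k$ (depending on $h$, $r$ and $m=\dim M$) and a constant $K=K(h,T)$ such that whenever $\|u\|_{P^l(I)}\leq\delta$ one has
$$
\|(L_u-L)(w)\|_{P^h(I)}\leq K\,\delta\,\|w\|_{P^{h+1}(I)}.
$$
Accepting this, combining it with the previous inequality yields
$$
\|w\|_{P^{h+1}(I)}\leq C_0\|w_{t_0}\|_{W^{r(2h+1),2}}+C_0\|\partial_t w-L_u(w)\|_{P^h(I)}+C_0K\delta\,\|w\|_{P^{h+1}(I)},
$$
and choosing $\delta$ so that $C_0K\delta\leq \tfrac12$ absorbs the last term on the left, producing \eqref{viola} with $C=2C_0$.

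The main obstacle is proving the perturbation estimate on $(L_u-L)(w)$. Expanding by Leibniz in both time and space (the $P^h$ norm controls space derivatives up to order $2rh$ and time derivatives up to order $h$), one encounters products of mixed derivatives of the coefficients, each involving at most $2rh+2r-1$ spatial and $h$ temporal derivatives of $u$, times mixed derivatives of $w$ of total parabolic order at most $2r(h+1)$. Choosing $l$ large enough so that a parabolic Sobolev embedding $P^l(I)\hookrightarrow C^{2rh,h}(M\times I)$ holds (possible for any fixed $h$ provided $l$ is large in terms of $h$, $r$ and $m$, in analogy with the trace inequality used in Corollary~\ref{tracetheorem}) lets one pull out $C^0$-norms of the coefficient derivatives, uniformly bounded by some function of $\|u\|_{P^l(I)}\leq \delta$; the vanishing of the coefficients at $u=0$ produces the linear factor $\delta$, while the remaining factors involving $w$ recombine, via the multiplicative structure of parabolic Sobolev spaces, into the $P^{h+1}(I)$-norm. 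This calculation is routine but tedious and parallels the analogous argument in the proof of Lemma~7.13 of \cite{HP}; it is the only substantive ingredient beyond the application of Theorem~\ref{IFT}.
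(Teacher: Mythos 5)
Your proposal is correct and follows essentially the same route as the paper: a linear a priori estimate for $\partial_t-L$ (the paper cites proposition 2.3 of \cite{MM} directly, which is the same ingredient underlying theorem \ref{IFT}), the decomposition $\partial_t w-L(w)=(\partial_t w-L_u(w))+(L_u-L)(w)$, smallness of the operator norm of $L_u-L$ deduced from the smooth dependence of the coefficients on $u$ together with a parabolic Sobolev embedding into a $C^m$ space, and absorption of the perturbation term. The level of detail you leave to the ``routine but tedious'' Leibniz/embedding computation matches what the paper itself does.
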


\begin{proof}
First of all we recall that by proposition 2.3 in \cite{MM} for every $h>0$, $z\in P^{h+1}(I)$ and $v\in P^{h}(I)$ we have 
\b\label{HPlemma7.13}
\| z\|_{P^{h+1}(I)}\leq C\left(\| z_{t_0}\|_{W^{r(2h+1),2}}+\|\partial_t  z-L_v( z)\|_{P^{h}(I)}\right)
\e
for some $C=C(n,h,T,v)$. \\
For every $l$ let us denote by $[L_u]_{P^{l}(I)}$ the norm of $L_u$ as an operator from $P^{l}(I)$ to $P^{l-1}(I)$. We first observe that if  $[L_u-L]_{P^{h}(I)}<\delta'$, for $\delta'$ small enough, then \eqref{HPlemma7.13} with $v=u$ holds with $C$ dependent on $\delta'$, but independent of $u$. This can be done as follows. Let $w\in P^{h+1}(I)$ and let 
$$
f=L_{u}(w)-\partial_tw\,. 
$$
Then $z=w$ is a solution to 
$$
\partial_tz=L(z)+(L_u-L)(w)+f
$$
and formula \eqref{HPlemma7.13} tells 
$$
\|w\|_{P^{h+1}(I)}\leq C\left(\|w_{t_0}\|_{W^{r(2h+1),2}}+\|(L_u-L)(w)+f\|_{P^{h}(I)}\right)
$$
with $C$ independent of $L_u$.
Hence 
\begin{eqnarray*}
\|w\|_{P^{h+1}(I)} & \leq & C\left(\|w_{t_0}\|_{W^{r(2h+1),2}}+\|(L_u-L)(w)\|_{P^{h}(I)}+\|f\|_{P^{h}(I)}\right)\\
& \leq & C\left(\|w_{t_0}\|_{W^{r(2h+1),2}}+[L_u-L]_{P^{h}(I)}\, \|w\|_{P^{h+1}(I)}+\|f\|_{P^{h}(I)}\right)\\
& \leq & C\left(\|w_{t_0}\|_{W^{r(2h+1),2}}+\delta'\, \|w\|_{P^{h+1}(I)}+\|f\|_{P^{h}(I)}\right)\,,
\end{eqnarray*}
and for $\delta'$ small enough we get  
\begin{eqnarray*}
\|w\|_{P^{h+1}(I)} & \leq & \frac{C}{1-C\delta'}\left( \|w_{t_0}\|_{W^{r(2h+1),2}}+\|f\|_{P^{h}(I)} \right)\\
 & = & \frac{C}{1-C\delta'}\left(\|w_{t_0}\|_{W^{r(2h+1),2}}+\|\partial_tw-L_{u}(w)\|_{P^{h}(I)} \right)\,. 
\end{eqnarray*}

Since the coefficients of $L_u$ depend smoothly on $u$ and its space derivatives up to order $2r-1$, a suitable bound on $\|u\|_{C^{m}(M\times I)}$, for $m$ sufficiently large in terms of $h$, implies  that $[L_u-L]_{P^{h}(I)}<\delta'$. Using the parabolic Sobolev embedding theorem of \cite[Proposition 4.1]{MM} we can find $l \geq k$ depending on $m$ such that condition $u\in C^{\infty}_+(M\times I) \cap B^{P^{l}(I)}(\delta)$ with $\delta$ small enough implies $[L_u-L]_{P^{h}(I)}<\delta'$ and the claim follows. 
\end{proof}


Now we can prove the following interior estimate for solutions of the linearized equation.

\begin{lemma}
\label{intest}
Let $I=[t_0,t_0+T] \subseteq \R_{\geq 0}$.
For all $\epsilon <T$ and $h \in \mathbb{N}$ there exist $l \geq k$ and  $\delta > 0$ such that for  $w\in P^{h}(I)$ and $u\in C^{\infty}_+(M\times I) \cap B^{P^{l}(I)}(\delta)$ and 
$$
\partial_tw=L_{u}w\,,
$$
we have
\b\label{induction}
\|w\|_{P^{h}([t_0+\epsilon,t_0+T])}\leq C\|w\|_{P^0(I)}\,. 
\e 
for some $C=C(n,h,\delta,T,\epsilon)$
\end{lemma}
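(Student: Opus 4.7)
The plan is to prove the estimate by induction on $h$, using Lemma \ref{indipHam} together with a time cutoff argument that is standard for parabolic interior regularity.

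The base case $h=0$ is trivial (we may take $C=1$ and any $l \geq k$, $\delta>0$). For the inductive step, assume the statement for some $h\geq 0$. Given $\epsilon<T$, apply the induction hypothesis on the slightly larger subinterval $[t_0+\epsilon/2,t_0+T]$ to obtain $l_h \geq k$ and $\delta_h>0$ such that, whenever $u \in C^{\infty}_+(M\times I)\cap B^{P^{l_h}(I)}(\delta_h)$ and $\partial_t w = L_u w$, one has
\begin{equation}\label{hyp-induction}
\|w\|_{P^{h}([t_0+\epsilon/2,\,t_0+T])}\leq C_h\|w\|_{P^0(I)}.
\end{equation}
Fix a smooth cutoff $\chi\in C^{\infty}(\R)$ with $\chi\equiv 0$ on $(-\infty,t_0+\epsilon/2]$ and $\chi\equiv 1$ on $[t_0+\epsilon,\infty)$, and set $\tilde w := \chi w$. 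Then on $J:=[t_0+\epsilon/2,\,t_0+T]$ the function $\tilde w$ satisfies
\begin{equation*}
\partial_t\tilde w - L_u(\tilde w) = \chi'\,w,\qquad \tilde w_{|t_0+\epsilon/2}=0.
\end{equation*}

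Now apply Lemma \ref{indipHam} at level $h+1$ on the interval $J$: we obtain $l_{h+1} \geq \max(l_h,k)$ and $\delta_{h+1}\leq \delta_h$ (shrinking $\delta_h$ if necessary) such that for $u\in C^\infty_+(M\times I)\cap B^{P^{l_{h+1}}(I)}(\delta_{h+1})$,
\begin{equation*}
\|\tilde w\|_{P^{h+1}(J)}\leq C\bigl(\|\tilde w_{|t_0+\epsilon/2}\|_{W^{r(2h+1),2}}+\|\chi'w\|_{P^{h}(J)}\bigr)
\leq C\,\|\chi'\|_{C^{h+1}}\,\|w\|_{P^{h}(J)}.
\end{equation*}
Combining this with \eqref{hyp-induction} and observing that $w\equiv\tilde w$ on $[t_0+\epsilon,t_0+T]$, we conclude
\begin{equation*}
\|w\|_{P^{h+1}([t_0+\epsilon,\,t_0+T])}\leq \|\tilde w\|_{P^{h+1}(J)}\leq C'\,\|w\|_{P^0(I)},
\end{equation*}
which is the desired bound at level $h+1$.

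The only subtle point is a bookkeeping issue rather than an analytic one: at each induction step the smallness radius $\delta$ must be shrunk and the regularity index $l$ must be increased, since Lemma \ref{indipHam} requires a higher $P^l$-closeness of $u$ to $0$ as the order $h$ increases. Since the induction terminates at the prescribed $h$, this causes no problem, and the constants $C$, $\delta$, $l$ produced in this way depend only on $n$, $h$, $T$, $\epsilon$ (and on the choice of the cutoff $\chi$, which is fixed once $\epsilon$ is given). No other difficulty is expected, as the argument is a direct parabolic analogue of the elliptic Caccioppoli-type iteration.
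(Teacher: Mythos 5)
Your argument is correct and follows essentially the same route as the paper's: induction on $h$, a cutoff $\chi$ vanishing on $[t_0,t_0+\epsilon/2]$ and equal to $1$ past $t_0+\epsilon$, the identity $\partial_t\tilde w-L_u(\tilde w)=\dot\chi\, w$ with vanishing initial trace, the a priori estimate \eqref{viola} of Lemma \ref{indipHam}, and the induction hypothesis applied with $\epsilon/2$ in place of $\epsilon$. The bookkeeping remark about enlarging $l$ and shrinking $\delta$ at each step is the only point the paper leaves implicit, and you handle it correctly.
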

\begin{proof}
We fix $0<\epsilon<T$ and we prove the statement by induction on $h$. For $h=0$ the claim is trivial. So we assume the statement true  up to $h=N$. 
Fix $w\in P^{N+1}(I)$ satisfying $\partial_tw=L_{u}w$.  Take a smooth cut-off function $\psi: [0,T] \to [0,1]$ such that  $\psi \equiv 0$ in  $[0,\epsilon/2]$ and $\psi \equiv 1$ in  $[\epsilon,T]$. 
Take $\chi(t)=\psi(t-t_0)$ and set $\tilde w_t(x)=\chi(t)w_t(x)$. Then $\partial_t\tilde w=\dot \chi w+\chi \partial_t w$ and 
$$
\partial_t \tilde w-L_u(\tilde w)=\dot \chi w
$$
and inequality \eqref{viola} implies  
$$
\|\tilde w\|_{P^{N+1}([t_0+\epsilon/2,t_0+T])}\leq C \|\dot \chi w\|_{P^{N}([t_0+\epsilon/2,t_0+T])} 
\leq C\,\| w\|_{P^{N}([t_0+\epsilon/2,t_0+T])}
$$
where in the second inequality we use a new constant $C$. 
Therefore 
$$
\|w\|_{P^{N+1}([t_0+\epsilon,t_0+T])}\leq\|\tilde w\|_{P^{N+1}([t_0+\epsilon/2,t_0+T])}\leq C\,\| w\|_{P^{N}([t_0+\epsilon/2,t_0+T])}\,.
$$
Now the claim follows by the induction assumption. 
\end{proof}


Now we are in a position to prove the $W^{k,2}$-exponential decay of $Q(u_t)$.

\begin{lemma}\label{Wexp}
Let $u\in C^{\infty}_+(M\times [0,T])$ be a solution to $\partial_t u_t=Q(u_t)$ in $[0,T]$ and let $\lambda_1$ be the first positive eigenvalue of $-L$.
Then for every $\epsilon \in (0,T/2)$ there exist $\delta >0$ and $l \geq k$ such that 
if $u \in B^{P^{l}([0,T])}(\delta)$, we have 
$$
\|Q(u_t)\|^2_{W^{k,2}}\leq C \|Q(u_0)\|^2_{L^2} \,{\rm e}^{-\lambda_1 t} \mbox{ for every }t\in [\epsilon,T-\epsilon]\,,
$$
for some positive $C=C(n,k,\delta,T,\epsilon)$.
\end{lemma}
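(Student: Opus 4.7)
The plan is to combine the $L^2$-exponential decay of $Q(u_t)$ from Lemma \ref{L2expdecay} with the interior parabolic regularity of Lemma \ref{intest} applied to the function $w_t := Q(u_t)$, and then pass from parabolic Sobolev control to spatial $W^{k,2}$ control via Corollary \ref{tracetheorem}.

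The starting observation is that if $u$ solves $\partial_t u_t = Q(u_t)$, then $w_t := \partial_t u_t = Q(u_t)$ satisfies the linearized equation
\[
\partial_t w_t = L_{u_t}(w_t)
\]
(obtained by differentiating the flow equation in time). Thus $w$ is an admissible input for Lemma \ref{intest}. First I would fix indices: choose $k$ to be an odd multiple of $r$ (enlarging it if necessary) so that Corollary \ref{tracetheorem} applies with $h_0 := (k+r)/(2r)$, and pick $l$ large enough that the hypotheses of Lemma \ref{intest} at order $h_0$ are met, that the parabolic Sobolev embedding (\cite[Proposition 4.1]{MM}) transfers smallness of $\|u\|_{P^l([0,T])}$ into the pointwise spatial smallness $\sup_{t\in[0,T]}\|u_t\|_{W^{k,2}}<\delta_1$ needed by Lemma \ref{L2expdecay}, and that $u$ satisfies the smallness hypothesis $\|u\|_{P^l(I)}<\delta$ of Lemma \ref{intest} on every subinterval $I\subseteq[0,T]$.

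Having set this up, fix $t_0\in[\epsilon,T-\epsilon]$ and work on the window $I_{t_0}:=[t_0-\epsilon/2,\,t_0+\epsilon/2]\subseteq[0,T]$. Lemma \ref{intest} applied to $w$ on $I_{t_0}$ (with its internal small parameter chosen so the interior subinterval contains $[t_0,\,t_0+\epsilon/2]$) yields
\[
\|w\|_{P^{h_0}([t_0,\,t_0+\epsilon/2])}\;\leq\;C_1\,\|w\|_{P^0(I_{t_0})}.
\]
Next, Corollary \ref{tracetheorem} (after the obvious time translation) converts this into a trace estimate at $t=t_0$:
\[
\|w_{t_0}\|_{W^{k,2}}\;\leq\;C_2\,\|w\|_{P^{h_0}([t_0,\,t_0+\epsilon/2])}\;\leq\;C_3\,\|w\|_{P^0(I_{t_0})}.
\]
Finally, bounding the right-hand side using Lemma \ref{L2expdecay} (whose hypothesis was arranged in the preliminary step),
\[
\|w\|_{P^0(I_{t_0})}^2 \;=\; \int_{t_0-\epsilon/2}^{t_0+\epsilon/2}\!\|w_s\|_{L^2}^2\,ds \;\leq\; \epsilon\, e^{-\lambda_1(t_0-\epsilon/2)}\,\|Q(u_0)\|_{L^2}^2 \;\leq\; C_4\, e^{-\lambda_1 t_0}\,\|Q(u_0)\|_{L^2}^2.
\]
Squaring the trace inequality and substituting yields $\|Q(u_{t_0})\|_{W^{k,2}}^2 \leq C\, e^{-\lambda_1 t_0}\,\|Q(u_0)\|_{L^2}^2$ for all $t_0\in[\epsilon,T-\epsilon]$, as claimed.

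The only delicate point is the bookkeeping: one must choose a single $l\geq k$ which is simultaneously large enough to (i) trigger the parabolic Sobolev embedding that feeds the hypothesis of Lemma \ref{L2expdecay}, (ii) satisfy the smallness condition in Lemma \ref{intest} at order $h_0$ and on every window of length $\epsilon$, and (iii) give meaning to the trace bound of Corollary \ref{tracetheorem}. Since all three requirements are controlled by $\|u\|_{P^l([0,T])}$ and the constants depend continuously on $T$ and $\epsilon$ but not on $t_0$, a single $\delta>0$ works uniformly in $t_0\in[\epsilon,T-\epsilon]$, producing the uniform constant $C=C(n,k,\delta,T,\epsilon)$ in the statement.
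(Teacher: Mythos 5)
Your proposal is correct and follows essentially the same route as the paper: differentiate the flow to see that $w=Q(u)$ solves the linearized equation, invoke Lemma \ref{L2expdecay} for the $L^2$ decay, apply the interior estimate of Lemma \ref{intest} to upgrade to a parabolic Sobolev bound, and finish with the trace estimate of Corollary \ref{tracetheorem}. The only cosmetic differences are that you work on sliding windows $[t_0-\epsilon/2,t_0+\epsilon/2]$ rather than the intervals $[t-\epsilon/2,T]$ used in the paper, and you obtain the pointwise smallness $\|u_t\|_{W^{k,2}}\leq\delta'$ from the parabolic Sobolev embedding where the paper uses Corollary \ref{tracetheorem}; neither affects the argument.
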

\begin{proof}
First note that the assumption $u \in B^{P^{l}([0,T])}(\delta)$ implies, via corollary \ref{tracetheorem}, the pointwise bound $\|u_t\|_{W^{k,2}} \leq \delta'$ for every $t \in [0,T]$ for some $\delta'>0$ such that $\delta' \to 0$ when $\delta \to 0$.
In view of lemma \ref{L2expdecay} for a suitable choice of  $\delta>0$ we have that, 
$$
\|Q(u_t)\|_{L^2}^2\leq {\rm e}^{-\lambda_1 t} \|Q(u_0)\|_{L^2}^2\,  \quad \mbox{for every $t \in [0,T]$}.
$$
Integrating we get
$$
\int_{t}^{T} \|Q(u_\tau)\|_{L^2}^2\,d\tau\leq\frac{{\rm e}^{-\lambda_1t}}{\lambda_1}\|Q(u_0)\|_{L^2}^2\,.
$$
By differentiating $\partial_t u_t=Q(u_t)$ we get that $v=Q(u)$ solves 
$$
\partial_t v-L_{u}(v)=0\,.
$$
Hence we can apply lemma \ref{intest} and there exist $C=C(n,k,\delta,T,\epsilon)$ such that for $t \in [\epsilon/2,T]$
\begin{multline*}
\|Q(u)\|^2_{P^{\frac{k+r}{2r}}([t,T])}\leq C\|Q(u)\|^2_{P^0[t-\epsilon/2,T]}= \\
=C\int_{t-\epsilon/2}^{T}\|Q(u_\tau)\|_{L^2}^2\,d\tau \leq C\frac{{\rm e}^{-\lambda_1(t-\epsilon/2)}}{\lambda_1} \|Q(u_0)\|_{L^2}^2\,.
\end{multline*}
Furthermore, using corollary \ref{tracetheorem} with the same $\epsilon$, we have that
$$
\|Q(u_{t})\|_{_{W^{k,2}}}^2 \leq C \|Q(u)\|^2_{P^{\frac{k+r}{2r}}([t,T])}
$$
for every $t\in [\epsilon,T-\epsilon]$
and so 
$$
\|Q(u_{t})\|_{_{W^{k,2}}} ^2 \leq C \|Q(u_0)\|^2_{L^2}{\rm e}^{-\lambda_1 t}
$$
for every $t\in [\epsilon,T-\epsilon]$ which implies the statement. 
\end{proof}

\begin{proof}[Proof of theorem $\ref{stability}$] 
Let $T>0$ be fixed and $0<\epsilon<\frac{T}{2}$. In view of lemma \ref{corIFT} and lemma \ref{Wexp}, there exists $\delta>0$ such that  if $\|u_0\|_{W^{l,2}}\leq \delta$ for every $l \geq k$, then the solution $u_t$ to the geometric flow \eqref{parabolic} exists in $[0,T]$ and 
$$
\|Q(u_t)\|_{W^{k,2}}\leq C\|Q(u_0)\|_{L^2}{\rm e}^{-\lambda t/2} \mbox{ for every }t\in [\epsilon,T-\epsilon]\,,
$$
for some $C=C(n,k,\delta,T,\epsilon)>0$.  Choose $u_0$ and $\epsilon$ such that  $\|u_0\|_{W^{k,2}}\leq \delta$ and 
\begin{equation}\label{condizione}
C\|Q(u_0)\|_{L^2}\frac{{\rm e}^{-\lambda \epsilon}}{\lambda}\sum_{h=0}^{\infty}{\rm e}^{-\lambda h(T-2\epsilon)}+\|u_{\epsilon}\|_{W^{k,2}}\leq \delta\,.  
\end{equation}
Now we show that $u$ can be extended in $M\times [0,\infty)$. 
For $t \in [\epsilon,T-\epsilon]$ we have 
$$
\begin{aligned}
\|u_{t}\|_{W^{k,2}}=&\left\|\int_{\epsilon}^{t}Q(u_\tau)\,d\tau+u_\epsilon\right\|_{W^{k,2}}\leq \int_{\epsilon}^{t}\|Q(u_\tau)\|_{W^{k,2}}\,d\tau+\|u_\epsilon\|_{W^{k,2}}\\
&\leq C\|Q(u_0)\|_{L^2}\frac{{\rm e}^{-\lambda_1\epsilon}}{\lambda_1}+\|u_\epsilon\|_{W^{k,2}}\,
\end{aligned}
$$
and condition \eqref{condizione} implies $\|u_t\|_{W^{k,2}}\leq \delta$. \\
\indent In particular, $\|u_{T-\epsilon}\|_{W^{k,2}}\leq \delta $ and $u$ can be extended  in $M\times [0,2T-2\epsilon]$. Since also $\|u_{T-2\epsilon}\|_{W^{k,2}}\leq \delta $ we have 
$$
\|Q(u_t)\|_{W^{k,2}}\leq C \|Q(u_0)\|_{L^2} \,{\rm e}^{-\lambda_1 t} \,,\mbox{ for every }t\in [T-\epsilon,2T-3\epsilon]
$$ 
and so for $t\in [T-\epsilon, 2T-3\epsilon]$ we have 
$$
\begin{aligned}
\|u_t\|_{W^{k,2}}=&\left\|\int_{T-\epsilon}^{t}Q(u_\tau)\,d\tau+u_{T}\right\|_{W^{k,2}}\leq \int_{T-\epsilon}^{t}\|Q(u_\tau)\|_{W^{k,2}}\,d\tau+\|u_{T-\epsilon}\|_{W^{k,2}}\\
&\leq C\|Q(u_0)\|_{L^2}\left(\frac{{\rm e}^{-\lambda_1 (T-\epsilon)}}{\lambda_1}+\frac{{\rm e}^{-\lambda_1 \epsilon}}{\lambda_1}\right)+\|u_{\epsilon}\|_{W^{k,2}}\leq \delta\,.
\end{aligned}
$$
In particular, $\|u_{2T-3\epsilon}\|_{W^{k,2}}<\delta$ and $u$ can be extended in $M\times [0,3T-3\epsilon]$.  Moreover since $\|u_{2T-4\epsilon}\|<\delta$ for $t\in [2T-3\epsilon,3T-5\epsilon]$ we have   
$$
\begin{aligned}
\|u_t\|_{W^{k,2}}=&\left\|\int_{2T-3\epsilon}^{t}Q(u_\tau)\,d\tau+u_{2T-3\epsilon}\right\|_{W^{k,2}}\leq \int_{2T-3\epsilon}^{t}\|Q(u_\tau)\|_{W^{k,2}}\,d\tau+\|u_{2T-3\epsilon}\|_{W^{k,2}}\\
&\leq C\|Q(u_0)\|_{L^2} \left(\frac{{\rm e}^{-\lambda_1 (2T-3\epsilon)}}{\lambda_1}+\frac{{\rm e}^{-\lambda_1 (T-\epsilon)}}{\lambda_1}+\frac{{\rm e}^{-\lambda_1 \epsilon}}{\lambda}\right) +\|u_\epsilon\|_{W^{k,2}}\leq \delta\,. 
\end{aligned}
$$
By iterating this procedure yields that for every positive integer $N$ we have the estimate
$$
\|u_{(N+1)T-(2N+1)\epsilon}\|_{W^{k,2}}\leq C\|Q(u_0)\|_{L^2} \frac{{\rm e}^{-\lambda_1 \epsilon}}{\lambda_1} \sum_{h=0}^{N}{\rm e}^{-\lambda h(T-2\epsilon)} +\|u_\epsilon\|_{W^{k,2}}\leq \delta\,.
$$
Therefore the maximal solution to \eqref{parabolic} is defined in $M\times [0,\infty)$. 
Finally we define 
$$
u_{\infty}:=u_0+\int_{0}^{\infty} Q(u_t)\,dt\,.
$$
Since 
$$
u_t-u_\infty=\int_0^tQ(u_\tau)\,d\tau-\int_{0}^{\infty} Q(u_\tau)\,d\tau=-\int_t^{\infty} Q(u_\tau)\,d\tau\,,
$$
we have 
$$
\|u_t-u_\infty\|_{W^{k,2}}= \left\|\int_{t}^\infty Q(u_\tau)\,d\tau \right\|_{W^{k,2}}\leq \int_{t}^\infty \|Q(u_\tau)\|_{W^{k,2}}\,d\tau \leq C\|Q(u_0)\|_{L^2}^2{\rm e}^{-\lambda_1t/2}\,,
$$
and $u_t$ converges exponentially fast to $u_{\infty}$ in $W^{k,2}$-norm. Finally, since $Q$ is continuous, we have 
$$
\|Q(u_{\infty})\|_{W^{k,2}}=\|\lim_{t\to \infty}Q(u_t)\|_{W^{k,2}}\leq \lim_{t\to \infty}\|Q(u_t)\|_{W^{k,2}}=0
$$
and then $Q(u_\infty)=0$, as required.
 
Finally to obtain the claim it is enough to observe that $k>\frac12{\rm dim}\,M+2r-1$ is arbitrary.
\end{proof}

\section{Stability of Calabi-type flows}
\subsection{Stability of the classical Calabi flow around  cscK metrics}
In \cite{Chen-He} Chen and He proved the stability of the Calabi flow around cscK metrics. 
Here we observe that theorem \ref{stability} can be used to obtain an alternative proof of the 
same result.

Let $(M,g)$ be a compact K\"ahler manifold with fundamental form $\omega$.  The {\em Calabi flow} is the geometric flow of K\"ahler forms governed by the following equation  
\b\label{CF}
\partial_t\omega_t=\sqrt{-1}\partial\bar\partial R_t\,,\quad \omega_{|t=0}=\omega_0
\e   
where $R_t$ is the scalar curvature of $\omega_t$ and the initial $\omega_0$ lies in the same cohomology class as $\omega$. 
As an application of our Theorem \ref{main} we can give an alternative proof of Theorem 4.1 in \cite{Chen-He}.
We restate it for the sake of completeness
\begin{theorem}[Chen-He]
\label{classtab}
Let $(M,\omega)$ be a  compact K\"ahler manifold with constant scalar curvature. Then there exists $\delta>0$ such that if $\omega_0$ is a K\"ahler metric satisfying 
$$
\|\omega_0-\omega\|_{C^{\infty}}<\delta\,,
$$  
then the Calabi-flow starting from $\omega_0$ is immortal and converges in $C^{\infty}$ topology to a constant scalar curvature K\"ahler metric in $[\omega].$ 
 \end{theorem}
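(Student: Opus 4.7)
The plan is to reduce the Calabi flow \eqref{CF} to a scalar parabolic equation on Kähler potentials and then apply Theorem \ref{stability} directly, after checking that the hypotheses (h1)--(h3) are satisfied.

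First I would write the flow as a scalar equation. Since $\omega_0$ is $C^\infty$-close to $\omega$, in particular $[\omega_0]=[\omega]$ in $H^{1,1}(M,\R)$, and since both sides of \eqref{CF} are $\sqrt{-1}\partial\bar\partial$-exact, the solution $\omega_t$ stays in $[\omega]$. Hence one may write $\omega_t=\omega+\sqrt{-1}\partial\bar\partial \varphi_t$ for a smooth family of potentials, with $\varphi_0$ close to $0$ in $C^\infty$. Recalling that the average $\bar R$ of the scalar curvature is a topological invariant of $[\omega]$, define
$$
Q(v):=R_{\omega+\sqrt{-1}\partial\bar\partial v}-\bar R\qquad \text{on }C^{\infty}_+(M)=\{v:\omega+\sqrt{-1}\partial\bar\partial v>0\}.
$$
Up to a time-dependent additive constant (which does not affect the Kähler form), \eqref{CF} is equivalent to the scalar problem $\partial_t\varphi_t=Q(\varphi_t)$, $\varphi_{|t=0}=\varphi_0$.

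Next I would verify the assumptions of Theorem \ref{stability} with $r=2$. Condition (h1) is immediate: $Q(0)=R_\omega-\bar R=0$ because $\omega$ is cscK, and $Q(v+a)=Q(v)$ because the metric does not see additive constants. Condition (h3) follows from the well-known fact that the linearization of the Kähler scalar curvature map at a cscK metric is the (real) Lichnerowicz operator $L\psi=-\mathcal D^*\mathcal D\psi$, with $\mathcal D\psi=\bar\partial\nabla^{1,0}\psi$; this operator is self-adjoint and semi-negative definite on $W^{4,2}(M)$. For condition (h2), $\ker L=\ker\mathcal D$ consists of holomorphy potentials, i.e.\ functions whose $(1,0)$-gradient is a holomorphic vector field; under the standing assumption of no nontrivial holomorphy potentials (in particular when $\mathrm{Aut}^0(M,J)$ is trivial) the kernel reduces to constants, and $L(W^{4,2}_0)\subseteq L^2_0$ follows by self-adjointness. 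Strong ellipticity in the sense of \eqref{elliptic} is also present: the principal symbol of $-\mathcal D^*\mathcal D$ is a positive multiple of $|\xi|^4$, with uniform constants as $v$ varies in a small $C^\infty$-neighbourhood of $0$.

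With these verifications in hand, Theorem \ref{stability} applies: provided $\|\varphi_0\|_{C^\infty}$ is sufficiently small, the scalar flow $\partial_t\varphi_t=Q(\varphi_t)$ admits a global solution converging in $C^\infty$ to some $\varphi_\infty$ with $Q(\varphi_\infty)=0$, i.e.\ $\omega_\infty:=\omega+\sqrt{-1}\partial\bar\partial\varphi_\infty\in[\omega]$ is cscK. Translating back yields Theorem \ref{classtab}. The main obstacle is honest assumption (h2): on cscK manifolds with a nondiscrete automorphism group (such as $\mathbb{CP}^n$) the Lichnerowicz kernel is strictly larger than the constants, and Theorem \ref{stability} as stated does not directly apply. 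In that case one would either work orthogonally to the finite-dimensional space of holomorphy potentials (replacing the segment $Q^{-1}(0)$ by a manifold transverse to the automorphism orbits) or invoke a Łojasiewicz-type argument as in \cite{Chen-He}; extending our stability theorem to accommodate a nontrivial $\ker L$ is a natural generalization that would recover Chen-He's theorem in full generality.
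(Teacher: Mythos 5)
Your proposal follows essentially the same route as the paper: rewrite the Calabi flow as $\partial_t u_t=R_{u_t}-R$ on K\"ahler potentials, set $Q(v)=R_v-R$, identify the linearization at $0$ with the Lichnerowicz operator $L=-\mathcal{D}^*\mathcal{D}$, and invoke Theorem \ref{stability}. The caveat you raise about (h2) is well taken and worth recording: the paper asserts without comment that (h2) follows from $L=-\mathcal{D}^*\mathcal{D}$, but $\ker\mathcal{D}^*\mathcal{D}$ consists of holomorphy potentials and is strictly larger than the constants whenever $(M,J)$ carries nontrivial holomorphic vector fields (e.g.\ $\mathbb{CP}^n$), so the argument as written --- both yours and the paper's --- really covers only the case where the reduced automorphism group is discrete, and recovering Chen--He's theorem in full generality requires the extra work (quotienting by holomorphy potentials or a \L ojasiewicz-type argument) that you indicate.
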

\begin{proof}
Let 
$$
C^{\infty}_+(M)=\{u\in C^{\infty}(M)\,\,:\,\, \omega+\sqrt{-1}\partial \bar\partial u>0\}\,.
$$
We can rewrite the Calabi-flow \eqref{CF} in terms of K\"ahler potential as 
$$
\partial_tu_t=R_{t}-R\,,\quad u_{|t=0}=u_0
$$
where $\omega_0=\omega+i\partial \bar\partial u_0$, $R_t$ is  the scalar curvature of $\omega_t=\omega+i\partial \bar \partial u_t$
and $R$ is the scalar curvature of $\omega$.
Consider the operator
$$
Q(v)=R_v-R
$$
defined on K\"ahler potentials, where $R_v$ is the scalar curvature of $\omega+i\partial \bar \partial v$. 
We observe that $Q$ satisfies conditions (h1)--(h3) described in the introduction. 
It is obvious that $Q$ satisfies (h1). On the other hand since $R$ is constant we have 
$$
L(v)=Q_{|*0}(v)=-\mathcal D^*\mathcal Dv
$$ 
for every smooth function $v$, where $\mathcal D^*\mathcal D$  is the Lichnerowicz operator induced by $\omega$
(see e.g. \cite[Section 4]{Szeke}). It follows that $Q$ satisfies also conditions (h2) and (h3) and therefore theorem \ref{stability} implies the statement. 
\end{proof}
We observe that proposition \ref{classtab} is also implied by theorems 1 and 2 in \cite{TW0}. 
Moreover it is worth noting that the stability of the Calabi flow has been extended to extremal metrics in \cite{HZ}.


\subsection{Stability of \eqref{flow} near cscK metrics}\label{stabilitysection}
Now we prove the second part of theorem \ref{main} about the stability of \eqref{flow} near constant scalar curvature K\"ahler (cscK) metrics. 

On a compact complex manifold $M$ with cscK metric $\omega$  consider the parabolic flow \eqref{flow} with background form $\omega$. In this case the flow reduces to 
\b\label{flowk}
\frac{\partial}{\partial t}\omega_t^{n-1}=\sqrt{-1}\partial \bar \partial(s_{\omega_t})\wedge\omega^{n-2}\,,\quad 
 \omega_{|t=0}=\omega_0
\e
since $\omega$ is closed. Here it is convenient to reduce the balanced flow to a scalar one by using the  substitution 
$\omega_t=\omega^{n-1}+i\partial \bar\partial (u_t-R)\wedge \omega^{n-1}$ where $R$ is the scalar curvature of $\omega$. This leads to consider the parabolic equation 
$$
\partial_tu_t=s_{u_t}-R\,,\quad u_{t=0}=u_0\,. 
$$
Let $Q\colon C^\infty_\omega(M)\to C^\infty(M)$ be the operator 
$Q(v)=s_v-R$. As usual denote by $L$ the differential of $Q$ at $ 0$. Note that since $\omega$ is closed, $Q$ obviously satisfies hypothesis (h1). Next we show that $Q$ satisfies also conditions (h2) and (h3) described in the introduction and then we apply theorem \ref{parabolic}. 
Let $f \in C^{\infty}_\omega(M\times(-\delta,\delta))$ be an arbitrary time-dependent smooth function such that $f_0=0$. Let $v=\partial_t f_{|t=0}$.   We denote by $\dot \omega$ the time derivative of $\omega_{f_t}$ at $t=0$.  Since $\omega$ is closed we have 
$$
(n-1)\dot \omega\wedge \omega^{n-2}=\partial_{t|t=0}{\omega^{n-1}_f}= \sqrt{-1}\partial \bar \partial v\wedge \omega^{n-2}\,,
$$
which implies 
$$
\dot \omega=\frac{\sqrt{-1}}{n-1}\partial \bar \partial v\,.
$$
Now 
$$
Q(f_t)=s_{f_t}-R=-g_{f_t}^{\bar b a}\partial_a \bar\partial_b \log\det g_{f_t}-R
$$
and in analogy with the K\"ahler case 
we have 
\begin{eqnarray*}
L(v) & = & 
\partial_{t|t=0}Q(f_t)=
-\frac{1}{n-1} g^{\bar b k}\partial_k\bar\partial_l v g^{\bar l a} \rho_{a\bar b}- \frac{1}{n-1}g^{\bar b a}\partial_a \bar\partial_bg^{\bar m l}\partial_l \partial_{\bar m}v \\
 & = & -\frac{1}{n-1} \mathcal{D}^*\mathcal{D}v
\end{eqnarray*}
where $\rho$ is the Ricci form of $\omega$ and $\mathcal{D}^*\mathcal{D}$ is the Lichnerowicz operator, see again \cite{Szeke}.
Thus $Q$ satisfies all the hypotheses of theorem \ref{stability} and the result follows.


\section{remarks}
It is rather natural to compare the flow introduced in this paper with other flows of Hermitian metrics considered in literature.  
For instance in \cite{BV} the authors introduced the {\em balanced flow} 
$$
\partial_t\omega_t^{n-1}=\sqrt{-1}(n-1)!\partial \bar\partial *_t({\rm Ric}^C_{\omega_t}\wedge \omega_t)+(n-1)\Delta_{BC}\omega_{t}^{n-1}\,,\quad \omega_{|t=0}=\omega_0
$$
which evolves an initial balanced form in its Bott-Chern class and in \cite{TW} Tosatti and Weinkove introduced the 
$(n-1)$-{\em plurisubharmonic flow}
\b\label{pluri}
\partial_t\omega_t^{n-1}=-(n-1){\rm Ric}^C_{\omega_t}\wedge \omega^{n-2}\,,\quad \omega_{|t=0}=\omega_0
\e
which depends on the choice of a background form $\omega$ (the flow was subsequently studied by Gill in \cite{gill}). In both flows ${\rm Ric}^C_{\omega_t}$ is the Chern Ricci form of $\omega_t$, while in the the first one $\Delta_{BC}$ is the Bott-Chern Laplacian of $\omega_t$.   
Flow \eqref{flow} and the balanced flow can be both seen as a generalisation of the classical Calabi flow (see e.g. \cite{Chen-He}) to the non-K\"ahler case, but they are in fact different in many aspects. Firstly, \eqref{flow} is always a potential flow, while in general the balanced flow cannot be reduced to a scalar equation; \eqref{flow} does not preserve the K\"ahler condition, whilst when the initial form is K\"ahler the balanced flow reduces to the classical Calabi flow; 
the definition of \eqref{flow} depends on a fixed background metric; moreover as far as we know, the balanced flow equation is well-posed only when the initial form is balanced, while theorem \ref{main} in the present paper says that \eqref{flow} is always well-posed; finally, when the initial $\omega_0$  is balanced, then \eqref{flow} evolves $\omega_0$ in $\mathcal C_{\omega}(M)$, while in general the balanced flow moves $\omega_0$ in its Bott-Chern class, but outside  $\mathcal C_{\omega}(M)$. 

The $(n-1)$-{\em plurisubharmonic flow} is in some aspects similar to the flow considered in this paper, since its definition depends on the choice of a background metric. But in contrast to \eqref{flow}, \eqref{pluri} does not necessarily preserve the balanced condition if the background metric is non-K\"ahler.
  
As a final remark we note that recently Zheng introduced in \cite{tao} a new second order flow of Gauduchon metrics whose definition depends on the choice of a background Gauduchon metric.

\section*{Appendix: Symmetric $T$-bounded operators}
We refer to \cite{weidmann} for a detailed description of the following topics. Let $(H, \langle\cdot,\cdot\rangle)$ be a Hilbert space.  A {\em linear operator} $T$ on $H$ is a linear map from a linear subspace $D(H)$ of $H$ (called the {\em domain of $T$}) into $H$. $T$ is {\em symmetric} if $D(T)$ is dense in $H$ and 
$$
\langle h_1,T(h_2)\rangle=\langle T(h_1),h_2\rangle\quad \mbox{ for every } h_1,h_2\in D(T)\,. 
$$
Moreover $T$ is called {\em self-adjoint} if it is symmetric and  
$$
D^*(T)=\{h\in H\,\,:\,\, \mbox{ the map } f\mapsto \langle h,T(f)\rangle\,\, \mbox{ is continuous on } D(T)\}
$$ 
coincides with $D(T)$. 
The self-adjoint condition is in general more restrictive than the symmetry since $D^*(T)$ can properly  contain $D(T)$; for instance when $T$ is bounded, then $D^*(T)$ is always the whole $H$. Furthermore $T$ is said to be {\em bounded from below} if there exists a constant $\gamma$ such that
$$
\langle h,T(h)\rangle \geq \gamma \|h\|, \quad \mbox{ for every } h\in D(T)\,.
$$
Let us consider now two linear operators $T,V$  in $H$. $V$ is called {\em $T$-bounded} if $D(T)\subseteq D(V)$ and there exists a constant $C$ such that
$$
\|V(h)\|\leq C \,\sqrt{\|h\|^2+\|T(h)\|^2}, \quad \mbox{ for every } h\in D(T)\,.
$$
If $V$ is $T$-bounded, then 
$$
\|V(h)\|\leq C(\|h\|+\|T(h)\|), \quad \mbox{ for every } h\in D(T)
$$
and the infimum of all numbers $b\geq 0$ for which there exists an $a\geq 0$ such that 
$$
\|V(h)\|\leq a\|h\|+b\|T(h)\|, \quad\mbox{ for every } h\in D(T)
$$
is called the {\em $T$-bound} of $V$. Note that if $V$ is bounded (i.e. there exists a constant $C$ such that 
$\|V(h)\|\leq C\|h\|$), then $V$ is also $T$-bounded  with bound $0$ by every operator $T$ on $H$. 
In the paper we use the following (see e.g. \cite[Theorem 9.1]{weidmann})
\begin{theorem}\label{Tbound}
Let   $T,V$ be {\em linear operators} on $H$. Assume $T$ self-adjoint and bounded from below with lower bound $\gamma_T$ and  $V$ symmetric and $T$-bounded with $T$-bound $<1$. Then $T+V$ is self-adjoint and bounded from below.  Moreover if 
$$
\|V(h)\|\leq a\|h\|+b\|T(h)\|, \quad\mbox{ for every } h\in D(T),
$$
where $b<1$, then 
\b\label{gamma}
\gamma= \gamma_T-{\rm max}\left\{\frac{a}{1-b}, a+b|\gamma_T| \right\}
\e
is a lower bound of $T+V$. 
\end{theorem}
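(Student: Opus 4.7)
My plan is to follow the classical Kato--Rellich strategy, handling self-adjointness of $T+V$ and the explicit lower bound \eqref{gamma} through a single resolvent estimate. Both conclusions reduce to identifying the set of real $\mu$ for which $T+V-\mu$ is boundedly invertible on $H$.

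For self-adjointness, I would start from the fact that $T$ is self-adjoint and bounded below by $\gamma_T$: for any $\mu<\gamma_T$, spectral calculus gives $\|(T-\mu)^{-1}\|\le(\gamma_T-\mu)^{-1}$ together with the identity $T(T-\mu)^{-1}=I+\mu(T-\mu)^{-1}$. Feeding both into the $T$-bound hypothesis $\|V(\cdot)\|\le a\|\cdot\|+b\|T(\cdot)\|$ yields an estimate of the form
$$
\|V(T-\mu)^{-1}\|\;\le\;\frac{a+b|\mu|}{\gamma_T-\mu}+b\,,
$$
whose right-hand side tends to $b<1$ as $\mu\to-\infty$. Hence for $\mu$ sufficiently negative the operator $I+V(T-\mu)^{-1}$ is boundedly invertible on $H$, and the factorisation $T+V-\mu=(I+V(T-\mu)^{-1})(T-\mu)$ exhibits $T+V-\mu$ as a bijection from $D(T)$ onto $H$. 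Combined with the manifest symmetry of $T+V$ on $D(T)$, the standard criterion that a symmetric operator possessing a real point in its resolvent set is self-adjoint delivers self-adjointness of $T+V$.

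For the quantitative lower bound, the very same computation shows that $\mu\in\rho(T+V)$ whenever $(a+b|\mu|)/(\gamma_T-\mu)<1-b$. Solving this inequality for $\mu<\gamma_T$ is elementary but requires splitting into the two regimes $\mu\ge 0$ and $\mu<0$, and further keeping track of the sign of $\gamma_T$. On $\mu\ge 0$ the critical threshold turns out to be $(1-b)\gamma_T-a$, while on $\mu<0$ it turns out to be $\gamma_T-a/(1-b)$; the smaller of the two translates into precisely $\gamma_T-\max\{a/(1-b),\,a+b|\gamma_T|\}$, matching \eqref{gamma}.

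The main obstacle is not analytic but combinatorial: organising the case split so that a single closed-form expression emerges uniformly, without having to distinguish the sign of $\gamma_T$ or the relative magnitudes of $a$, $b$ and $|\gamma_T|$ in the final statement. Once the resolvent identity $T(T-\mu)^{-1}=I+\mu(T-\mu)^{-1}$ and the spectral bound on $(T-\mu)^{-1}$ are in hand, no further analytical input is required, and the argument proceeds by direct algebra.
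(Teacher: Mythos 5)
The paper itself offers no proof of this statement---it is the Kato--Rellich theorem, quoted from \cite[Theorem 9.1]{weidmann}---so your attempt can only be judged on its own terms. The skeleton is the standard and correct one: the factorisation $T+V-\mu=(I+V(T-\mu)^{-1})(T-\mu)$, the criterion that a symmetric operator with a real point in its resolvent set is self-adjoint, and reading the lower bound off from $(-\infty,\gamma)\subset\rho(T+V)$ via the spectral theorem. But the central resolvent estimate has a genuine quantitative gap. Bounding $\|T(T-\mu)^{-1}\|$ through the identity $T(T-\mu)^{-1}=I+\mu(T-\mu)^{-1}$ and the triangle inequality gives $1+|\mu|(\gamma_T-\mu)^{-1}$, which tends to $2$, not $1$, as $\mu\to-\infty$; hence your bound $(a+b|\mu|)(\gamma_T-\mu)^{-1}+b$ tends to $2b$, not to $b$ as you claim. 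For $b\ge\tfrac12$ it never falls below $1$, so neither self-adjointness nor any lower bound follows from the argument as written. The same looseness wrecks the threshold computation: for $\mu<0$ your condition $(a+b|\mu|)/(\gamma_T-\mu)<1-b$ is equivalent to $(1-2b)\mu<(1-b)\gamma_T-a$, which yields the threshold $((1-b)\gamma_T-a)/(1-2b)$ when $b<\tfrac12$ and no threshold at all when $b\ge\tfrac12$---not the claimed $\gamma_T-a/(1-b)$. The asserted agreement with \eqref{gamma} is therefore stated but not actually derived.

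The repair is to estimate $\|T(T-\mu)^{-1}\|$ by the spectral theorem rather than by the triangle inequality: since $\sigma(T)\subset[\gamma_T,\infty)$, for real $\mu<\gamma_T$ one has $\|T(T-\mu)^{-1}\|=\sup_{\lambda\in\sigma(T)}|\lambda|/(\lambda-\mu)\le\max\left\{1,\ |\gamma_T|/(\gamma_T-\mu)\right\}$. Feeding this into the $T$-bound hypothesis gives $\|V(T-\mu)^{-1}\|\le\max\left\{a(\gamma_T-\mu)^{-1}+b,\ (a+b|\gamma_T|)(\gamma_T-\mu)^{-1}\right\}$, which tends to $b<1$ as $\mu\to-\infty$ (closing the self-adjointness argument for every $b<1$), and is $<1$ precisely when $\gamma_T-\mu$ exceeds both $a/(1-b)$ and $a+b|\gamma_T|$, i.e.\ for all $\mu<\gamma_T-\max\left\{a/(1-b),\,a+b|\gamma_T|\right\}$. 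This delivers \eqref{gamma} in one stroke, with no case split on the sign of $\mu$ or of $\gamma_T$.
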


\end{document}